\newtheorem{theorem}{Theorem}
\newtheorem{corollary}{Corollary}
\newtheorem{lemma}{Lemma}
\newtheorem{proposition}{Proposition}
\theoremstyle{definition}
\newtheorem{remark}{Remark}
\newtheorem{example}{Example}
\numberwithin{equation}{section}
\begin{document}
\title[On PNT equivalences for Beurling numbers]{On PNT equivalences for Beurling numbers}

\author[G. Debruyne]{Gregory Debruyne}
\thanks{G. Debruyne gratefully acknowledges support by Ghent University, through a BOF Ph.D. grant}
\address{G. Debruyne\\ Department of Mathematics\\ Ghent University\\ Krijgslaan 281\\ B 9000 Gent\\ Belgium}
\email{gdbruyne@cage.UGent.be}
\author[J. Vindas]{Jasson Vindas} \thanks{The work of J. Vindas was supported by the Research Foundation--Flanders, through the FWO-grant number 1520515N}
\address{J. Vindas\\ Department of Mathematics\\ Ghent University\\ Krijgslaan 281\\ B 9000 Gent\\ Belgium}
\email{jvindas@cage.UGent.be}
\subjclass[2010]{Primary11N80; Secondary 11M41}
\keywords{Beurling generalized numbers; sharp Mertens relation; prime number theorem; mean-value vanishing of the M\"{o}bius function; zeta functions; Landau relations; PNT equivalences}

\begin{abstract}
In classical prime number theory several asymptotic relations are considered to be ``equivalent" to the prime number theorem. In the setting of Beurling generalized numbers, this may no longer be the case. Under additional hypotheses on the generalized integer counting function, one can however still deduce various equivalences between the Beurling analogues of the classical PNT relations. We establish some of the equivalences under weaker  conditions than were known so far.
\end{abstract}

\maketitle

\section{Introduction}

Several asymptotic relations in classical prime number theory are considered to be ``equivalent" to the prime number theorem. This means that they are deducible from one another by simple real variable arguments (see \cite[Sect. 5.2]{bateman-diamondbook}, \cite{diamond1982}, and \cite[Sect. 6.2]{na}). In recent works \cite{diamond-zhang,diamond-zhangbook}, Diamond and Zhang have investigated the counterparts of several of these classical asymptotic relations in the context of Beurling generalized numbers. They showed by means of examples that some of the implications between the relations may fail without extra hypotheses, and they found conditions under which the equivalences do or do not hold. 

The aim of this article is to improve various of their results by relaxing hypotheses on the generalized number systems. While Diamond and Zhang employed elementary methods in \cite{diamond-zhang,diamond-zhangbook} (a version of Axer's lemma and convolution calculus for measures), our approach here is different. Our arguments are based on recent complex Tauberian theorems for Laplace transforms with pseudofunction boundary behavior \cite{debruyne-vindasCT,korevaar2005}. This approach will enable us to clarify that only certain boundary properties of the zeta function near $s=1$ play a role for the equivalences.  

Let us introduce some terminology in order to explain our results. A Beurling generalized prime number system \cite{bateman-diamond,beurling,diamond-zhangbook} is simply an unbounded sequence of real numbers $p_{1}\leq p_{2}\leq p_3\leq  \dots$ with the only requirement $p_{1}>1$. The set of generalized integers is the multiplicative semigroup generated by the generalized primes and 1. We arrange them in a non-decreasing sequence where multiplicities are taken into account, $1=n_0<n_{1}\leq n_{2}\leq \dots $. One then considers the counting functions
\begin{equation*}
N(x)=\sum_{n_{k}\leq x}1, \quad \pi(x)=\sum_{p_{k}\leq x}1\ , \quad \Pi(x)=\pi(x)+\frac{1}{2}\pi(x^{1/2})+\frac{1}{3}\pi(x^{1/3})+\dots\: ,
\end{equation*}
and (the Chebyshev function)
\begin{equation}                                        
\label{ibpneq1}
\psi(x):=\int_{1}^{x}\log t \: \mathrm{d}\Pi(t)=\sum_{n_{k}\leq x}\Lambda(n_k).
\end{equation}
 As in classical number theory, the PNT $\pi(x)\sim x/\log x$ always becomes equivalent \cite{bateman-diamond} to $\Pi(x)\sim x/\log x$, and to 
\begin{equation*}
\psi(x)\sim x.
\end{equation*}
 
We are also interested in the asymptotic relation
 \begin{equation}                                        
\label{ibpneq3}
\psi_{1}(x) := \int^{x}_{1} \frac{\mathrm{d}\psi(t)}{t} = \sum_{n_{k} \leq x} \frac{\Lambda(n_{k})}{n_{k}} = \log x + c + o(1),
\end{equation}
which, as in \cite{diamond-zhang}, we call a \emph{sharp Mertens relation}. Note that for the ordinary rational primes (\ref{ibpneq3}) holds with $c=-\gamma$, where $\gamma$ is the Euler-Mascheroni constant; however, in general, we may have $c\neq-\gamma$. (For instance, adding an extra prime to the rational primes makes $c>-\gamma$.) Nonetheless, $c$ may be related to a generalized gamma constant associated to the generalized number system, see (\ref{eqconst-gamma}) below. 

The sharp Mertens relation is known to be equivalent to the PNT for rational primes. In the general case it is very easy to see that (\ref{ibpneq3}) always yields the PNT \cite[Prop. 2.1]{diamond-zhang} for Beurling primes. On the other hand, it was shown in \cite{diamond-zhang} that the converse implication only holds conditionally. 

Our first goal is to investigate conditions under which the equivalence between the PNT and the sharp Mertens relation remains true. In particular, we shall show:

\begin{theorem} \label{thmainsimplified} Suppose that a generalized number system satisfies the PNT and the conditions
	\begin{equation}
	\label{thmainas1simplified}
	N(x) = ax + o\left(\frac{x}{\log x}\right)
	\end{equation}
	and 
	\begin{equation}
	\label{thmainas3simplified} 
	\int^{\infty}_{1} \left|\frac{N(x) - ax}{x^{2}}\right| \mathrm{d}x < \infty,          
	\end{equation} 
	for some $a > 0$. Then, the sharp Mertens relation (\ref{ibpneq3}) is satisfied as well with constant 
\begin{equation}
	\label{eqconst-gamma} 	 
	c =-1 -\frac{1}{a}\int^{\infty}_{1} \frac{N(x) - ax}{x^{2}} \mathrm{d}x= -\frac{1}{a}\lim_{x\to\infty} \left(\sum_{n_{k}\leq x}\frac{1}{n_{k}}-a \log x\right).
	\end{equation}
\end{theorem}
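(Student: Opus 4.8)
The plan is to deduce the sharp Mertens relation from a single application of a complex Tauberian theorem for Laplace transforms with local pseudofunction boundary behavior \cite{debruyne-vindasCT,korevaar2005}, applied to a well-chosen nondecreasing function. Write $E(x):=N(x)-ax$ and $I:=\int_{1}^{\infty}E(x)x^{-2}\,\mathrm{d}x$ (finite by \eqref{thmainas3simplified}). Partial summation gives $\sum_{n_{k}\le x}n_{k}^{-1}=N(x)/x+\int_{1}^{x}N(t)t^{-2}\,\mathrm{d}t$, so \eqref{thmainas1simplified} and \eqref{thmainas3simplified} yield $\sum_{n_{k}\le x}n_{k}^{-1}=a\log x+(a+I)+o(1)$; in particular the two expressions for $c$ in \eqref{eqconst-gamma} coincide and equal $-(a+I)/a$. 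Consequently, setting
\begin{equation*}
W(x):=\psi_{1}(x)+\frac{1}{a}\sum_{n_{k}\le x}\frac{1}{n_{k}},
\end{equation*}
the relation \eqref{ibpneq3} becomes equivalent to $W(x)=2\log x+o(1)$. The function $W$ is nondecreasing, which supplies the Tauberian (slowly decreasing) side condition, and the PNT, through $\psi_{1}(x)=O(\log x)$, guarantees that the Mellin transform $\int_{1}^{\infty}W(x)x^{-s-1}\,\mathrm{d}x$ converges for $\Re s>0$.

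Passing to the transform side, for $\Re s>0$ one has $\int_{1}^{\infty}\psi_{1}(x)x^{-s-1}\,\mathrm{d}x=-\zeta'(s+1)/(s\,\zeta(s+1))$ and $\int_{1}^{\infty}(\sum_{n_{k}\le x}n_{k}^{-1})x^{-s-1}\,\mathrm{d}x=\zeta(s+1)/s$. Put $w=s+1$, $g(w):=\zeta(w)-\tfrac{a}{w-1}$, and $Z(w):=(w-1)\zeta(w)=a+(w-1)g(w)$; using $-\zeta'/\zeta-\tfrac{1}{w-1}=-Z'/Z$, a short computation gives
\begin{equation*}
\int_{1}^{\infty}W(x)x^{-s-1}\,\mathrm{d}x-\frac{2}{s^{2}}=\frac{1}{w-1}\left(-\frac{Z'(w)}{Z(w)}+\frac{g(w)}{a}\right)=\frac{g(w)^{2}-a\,g'(w)}{a\,Z(w)},
\end{equation*}
valid for $\Re s>0$. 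Since $Z'(1)=g(1)$ and $Z(1)=a$, the bracket vanishes at $w=1$, so the prefactor $1/(w-1)$ produces no pole: the double pole of $2/s^{2}$ has been absorbed, and the particular weight $1/a$ in the definition of $W$ was chosen exactly to achieve this (working with $\psi_{1}$ alone one is instead left with the difference quotient $(g(1)-g(w))/(w-1)$ of the merely continuous function $g$). By the cited Tauberian theorem it now suffices to show that the right-hand side has local pseudofunction boundary behavior on $\Re s=0$, equivalently on $\Re w=1$.

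This is where both hypotheses on $N$ are used. Writing $\mathcal{E}(w):=\int_{1}^{\infty}E(x)x^{-w-1}\,\mathrm{d}x=\int_{0}^{\infty}G(u)e^{-(w-1)u}\,\mathrm{d}u$ with $G(u):=E(e^{u})e^{-u}$, condition \eqref{thmainas3simplified} says $G\in L^{1}(0,\infty)$, so $\mathcal{E}$, hence $g=a+w\mathcal{E}(w)$ and $Z$, extends continuously to $\Re w\ge1$ (with $g(1)=a+I$); in particular $g^{2}$ and $1/Z$ are continuous there, and $Z(1)=a\neq0$. Condition \eqref{thmainas1simplified} says \emph{precisely} that $uG(u)\to0$, and this is what is really needed: the boundary values of $\mathcal{E}'(w)=-\int_{0}^{\infty}uG(u)e^{-(w-1)u}\,\mathrm{d}u$ on $\Re w=1$ form the Fourier transform of the bounded function $uG$, which vanishes at infinity, hence a pseudofunction---so $g'=\mathcal{E}+w\mathcal{E}'$ has pseudofunction boundary behavior on $\Re w=1$, whereas a bare $O(x/\log x)$ or $o(x)$ hypothesis would only yield a pseudomeasure here. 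Finally, \eqref{thmainas3simplified} together with the PNT forces $\zeta$, and hence $Z$, to be zero-free on $\Re w=1$, so that $1/(aZ)$ is continuous and nonvanishing there; dividing, $(g^{2}-ag')/(aZ)$ has local pseudofunction boundary behavior on $\Re s=0$. The Tauberian theorem applied to the nondecreasing $W$ then gives $W(x)=2\log x+o(1)$, i.e.\ \eqref{ibpneq3}. The crux---and the place where one must be most careful---is this last boundary analysis: one has to work with pseudofunctions rather than the weaker pseudomeasures (only the former are admissible in the Tauberian theorem), check that multiplication by the continuous nonvanishing factor $1/(aZ)$ preserves the pseudofunction property, and establish the nonvanishing of $\zeta$ on $\Re w=1$.
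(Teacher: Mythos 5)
Your approach is genuinely different from the paper's, and the central device is attractive: forming the nondecreasing function
\[
W(x)=\psi_1(x)+\frac{1}{a}\sum_{n_k\le x}\frac{1}{n_k},
\]
whose Mellin transform has the double pole at $s=0$ cancelled, lets you invoke a single Tauberian application with the side condition (monotonicity) supplied for free, instead of the paper's route through Lemma \ref{lemmatranslationzeta}, Lemma \ref{lemintconv}, and the decomposition into (\ref{preq1}) and (\ref{preq2}). The partial-summation derivation of $c$, the algebra leading to $\bigl(g^{2}-ag'\bigr)/(aZ)$, and the identification of \eqref{thmainas1simplified} with $uG(u)\to 0$ (hence $g'$ having pseudofunction boundary values) and of \eqref{thmainas3simplified} with $G\in L^{1}$ are all correct.

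However, there is a genuine gap in the boundary analysis. You need $Z(1+it)=(it)\,\zeta(1+it)$ to be nonvanishing on all of $\Re w=1$ so that $1/(aZ)$ is continuous (indeed, $A_{\mathrm{loc}}$) there, and you simply assert that ``\eqref{thmainas3simplified} together with the PNT forces $\zeta$ to be zero-free on $\Re w=1$''. This is not a triviality: for Beurling systems the PNT does not, on its own, give non-vanishing of $\zeta$ on the $1$-line, and your claim that the combination with $G\in L^{1}$ does so requires an argument you have not given. The paper sidesteps exactly this by observing (using that $1/(s-1)$ is a \emph{smooth} multiplier off $s=1$) that, granted the PNT, the sharp Mertens boundary condition is automatic away from $s=1$; the only thing one must prove is local pseudofunction boundary behavior \emph{near} $s=1$, where $Z$ is close to $a\neq 0$ and hence trivially zero-free with $1/Z\in A_{\mathrm{loc}}$ by Wiener division. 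You can repair your proof the same way: note that, away from $w=1$, the factor $1/(w-1)$ in $\frac{1}{w-1}\bigl(-Z'/Z+g/a\bigr)$ is a smooth multiplier, $-Z'/Z$ is a local pseudofunction by the PNT, and $g/a$ is continuous (hence a local pseudofunction), so nothing more is needed there; near $w=1$ use $Z(1)=a\neq 0$. A second, smaller, point: the step ``dividing, $(g^{2}-ag')/(aZ)$ has local pseudofunction boundary behavior'' tacitly multiplies the local pseudofunction $g'$ by the $A_{\mathrm{loc}}$ function $1/(aZ)$ and identifies the result with the boundary distribution of the product of Laplace transforms; this is exactly the content of Lemma \ref{productlpfbb} and its side conditions ($f\in\mathcal{B}'$ or $g\in\mathcal{D}'_{L^1}$), which you should verify rather than leave implicit.
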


Theorem \ref{thmainsimplified} contains the following result of Diamond and Zhang:

\begin{corollary}[\cite{diamond-zhang}]\label{cdiamond-zhang} Suppose that the PNT holds and for some $a>0$
\begin{equation}
\label{diamond-zhangcond1}
\left|\frac{N(x) - ax}{x}\right|\leq D(x), \quad x\geq1,
\end{equation}
where $D$ is right continuous, non-increasing, and satisfies
\begin{equation}
\label{diamond-zhangcond2}
\int_{1}^{\infty}\frac{D(x)}{x}\mathrm{d}x<\infty.
\end{equation}
Then, (\ref{ibpneq3}) holds.
\end{corollary}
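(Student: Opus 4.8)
The plan is to obtain Corollary \ref{cdiamond-zhang} as an immediate consequence of Theorem \ref{thmainsimplified}: all that is needed is to check that the hypotheses \eqref{diamond-zhangcond1}--\eqref{diamond-zhangcond2} force the two conditions \eqref{thmainas1simplified} and \eqref{thmainas3simplified} (with the same $a>0$), after which Theorem \ref{thmainsimplified} applies verbatim and yields \eqref{ibpneq3} together with the value of $c$ in \eqref{eqconst-gamma}. The integrability condition \eqref{thmainas3simplified} is the trivial half: from \eqref{diamond-zhangcond1},
\[
\left|\frac{N(x)-ax}{x^{2}}\right|=\frac{1}{x}\left|\frac{N(x)-ax}{x}\right|\leq \frac{D(x)}{x},
\]
so \eqref{diamond-zhangcond2} gives $\int_{1}^{\infty}|N(x)-ax|/x^{2}\,\mathrm{d}x<\infty$ at once.

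The only point requiring a short argument is that \eqref{diamond-zhangcond1}--\eqref{diamond-zhangcond2} imply $D(x)=o(1/\log x)$, whence $|N(x)-ax|/x\leq D(x)=o(1/\log x)$, i.e.\ \eqref{thmainas1simplified}. Here I would exploit that $D$ is nonnegative and non-increasing: for $x>1$,
\[
\int_{\sqrt{x}}^{x}\frac{D(t)}{t}\,\mathrm{d}t\;\geq\; D(x)\int_{\sqrt{x}}^{x}\frac{\mathrm{d}t}{t}\;=\;\frac{1}{2}\,D(x)\log x .
\]
Since $\int_{1}^{\infty}D(t)/t\,\mathrm{d}t$ converges, the left-hand side is a tail of a convergent integral and tends to $0$ as $x\to\infty$; therefore $D(x)\log x\to 0$, as claimed.

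With \eqref{thmainas1simplified} and \eqref{thmainas3simplified} now verified, Theorem \ref{thmainsimplified} delivers the sharp Mertens relation \eqref{ibpneq3}, and the constant is given by \eqref{eqconst-gamma}. I do not expect any real obstacle in this argument; the right-continuity and monotonicity of $D$ enter only to license the elementary estimate above (and to make sense of the integral \eqref{diamond-zhangcond2}), and play no further role. If anything, the one thing to be careful about is simply recording that the $a>0$ in Corollary \ref{cdiamond-zhang} is exactly the $a>0$ required by Theorem \ref{thmainsimplified}, so that the hypotheses match without any adjustment.
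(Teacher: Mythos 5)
Your proposal is correct and follows the same route as the paper's own proof: deduce \eqref{thmainas3simplified} directly from \eqref{diamond-zhangcond1}--\eqref{diamond-zhangcond2}, and deduce \eqref{thmainas1simplified} by noting that monotonicity of $D$ together with \eqref{diamond-zhangcond2} forces $D(x)=o(1/\log x)$. The paper leaves the latter step as a one-line remark, while you fill in the standard tail estimate $\tfrac12 D(x)\log x \leq \int_{\sqrt{x}}^{x}D(t)/t\,\mathrm{d}t \to 0$; this is exactly the argument intended.
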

\begin{proof}
Clearly the assumption implies (\ref{thmainas3simplified}). Moreover, since $D$ is non-increasing, we must have $D(x)=o(1/\log x)$, so (\ref{thmainas1simplified}) should hold as well.
\end{proof}

A simple condition that is included in those of both Theorem \ref{thmainsimplified} and Corollary \ref{cdiamond-zhang} is  
\begin{equation}
\label{diamond-zhangcond3}
N(x) = ax + O\left(\frac{x}{\log^{\alpha} x}\right)
\end{equation}
if $\alpha>1$. Interestingly, (\ref{diamond-zhangcond3}) with $\alpha=1$ and the PNT are not strong enough to ensure the sharp Mertens relation, as established by an example in \cite{diamond-zhang,diamond-zhangbook}. We will strengthen that result as well.

\begin{proposition}
\label{prop1pnteex} We have:
\begin{itemize}
\item[(i)] The PNT and (\ref{thmainas1simplified}) do not necessarily imply the sharp Mertens relation.
\item[(ii)] The PNT and (\ref{thmainas3simplified}) do not necessarily imply the sharp Mertens relation either.
\end{itemize}
\end{proposition}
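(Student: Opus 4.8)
The plan is to exhibit, for each part, a continuous Beurling generalized number system with the stated properties, constructed by prescribing the generalized prime measure $\mathrm{d}\Pi\geq0$ (equivalently the zeta function $\zeta(s)=\exp(\int_{1}^{\infty}x^{-s}\,\mathrm{d}\Pi(x))$, $\Re s>1$); positivity of $\mathrm{d}\Pi$ makes $N$, defined by $\zeta(s)=\int_{1}^{\infty}x^{-s}\,\mathrm{d}N(x)$, non-decreasing, while $\psi(x)=\int_{1}^{x}\log t\,\mathrm{d}\Pi(t)$ and $\psi_{1}(x)=\int_{1}^{x}t^{-1}\,\mathrm{d}\psi(t)$ are then explicit. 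The guiding idea, made precise by the complex Tauberian framework, is that the sharp Mertens relation is governed by the boundary behavior of $-\zeta'(s)/\zeta(s)-1/(s-1)$ at the single point $s=1$, at a scale one factor of $(s-1)$ finer than the boundary behavior of $\zeta$ itself (which controls (\ref{thmainas1simplified}) and (\ref{thmainas3simplified})) and finer than what is needed for the PNT. I would exploit this gap to plant an oscillation that is invisible to $\zeta$ near $s=1$ and to the PNT, but fatal to $\psi_{1}(x)-\log x$.

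For (i) I would take a mild multiplicative perturbation of the continuous primes,
\[
\mathrm{d}\Pi(t)=\bigl(1+\delta\rho(t)\bigr)\,\frac{\mathrm{d}t}{\log t}\quad(t\geq e^{e}),\qquad \rho(t)=\frac{\sin(\log\log\log t)}{\log t\,\log\log t},
\]
with $\delta>0$ small enough that $\mathrm{d}\Pi\geq0$ (so $p_{1}>1$ and $N$ is non-decreasing). Then $\psi(x)=\int_{e^{e}}^{x}(1+\delta\rho(t))\,\mathrm{d}t=x+O\bigl(x/(\log x\,\log\log x)\bigr)$, so the PNT holds; and an elementary substitution gives $\psi_{1}(x)=\int_{e^{e}}^{x}(1+\delta\rho(t))\,t^{-1}\,\mathrm{d}t=\log x-\delta\cos(\log\log\log x)+O(1)$, which has no limit, so the sharp Mertens relation fails. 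To obtain (\ref{thmainas1simplified}) one writes $\log\zeta(s)=-\log(s-1)+B(s)$ and checks, using $|e^{-(s-1)t}-1|\leq\min(2,|s-1|t)$, that $B(s)-B(1)=O\bigl(|s-1|\log\log\tfrac{1}{|s-1|}\bigr)$; hence $\zeta$ has a simple pole at $s=1$ with residue $a>0$ and $\zeta(s)-as/(s-1)=O\bigl(\log\log\tfrac{1}{|s-1|}\bigr)$ near $s=1$, while the relevant transform is locally a pseudofunction on the rest of $\Re s=1$. The complex Tauberian theorem for $\zeta$ then yields $N(x)=ax+o(x/\log x)$. (Consistently with Theorem \ref{thmainsimplified}, this example necessarily fails (\ref{thmainas3simplified}): the $\log\log$-order singularity forces $\int_{1}^{\infty}|N(x)-ax|\,x^{-2}\,\mathrm{d}x=\infty$.)

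Part (ii) is the genuinely delicate point, and I expect it to be the main obstacle. By Theorem \ref{thmainsimplified} the example for (ii) cannot satisfy (\ref{thmainas1simplified}), so it must be essentially different from the one above; moreover, a multiplicative perturbation of a continuous-prime measure $\mathrm{d}\Pi_{0}$ cannot work, because for such a perturbation (\ref{thmainas3simplified}) — which forces $\zeta(s)-as/(s-1)$ to extend continuously to $s=1$ — in turn forces the convergence of $\int^{x}t^{-1}\log t\,\mathrm{d}(\Pi-\Pi_{0})(t)$ as $x\to\infty$, and this is precisely the sharp Mertens relation. I would therefore aim to construct a system in which $N(x)-ax\in L^{1}(x^{-2}\,\mathrm{d}x)$ (whence $\zeta(s)-as/(s-1)$ extends continuously to $\{\Re s\geq1\}$) and $\psi(x)=x+o(x)$, while $-\zeta'(s)/\zeta(s)-1/(s-1)$ fails to have a limit — indeed fails to be a local pseudofunction — at $s=1$: continuity of $\zeta(s)-as/(s-1)$ does not prevent $(s-1)$ times the derivative of the regular part of $\log\zeta$ from oscillating as $s\to1$, and that oscillation is exactly what destroys the convergence of $\psi_{1}(x)-\log x$. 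Such a $\mathrm{d}\Pi$ must carry strong internal cancellation — large in total variation but with a tame Mellin transform near $s=1$, while the weighted integrals entering $\psi_{1}$ are not tame — and the crux is to carry out this construction while simultaneously keeping $\mathrm{d}\Pi\geq0$, the PNT, the $L^{1}(x^{-2}\,\mathrm{d}x)$-integrability of $N(x)-ax$, and the divergence of $\psi_{1}(x)-\log x$. The last two requirements are in genuine tension for naive constructions, and reconciling them, verified through the Tauberian machinery on which the paper relies, is where the real work lies.
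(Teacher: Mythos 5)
Your part (i) is in the same spirit as the paper's Example~\ref{pnteex1} (a small perturbation of the continuous primes), but the step you use to verify (\ref{thmainas1simplified}) does not hold up. None of the Tauberian tools in the paper's toolbox --- Theorem~\ref{thtauberian1}, Theorem~\ref{thtauberian2}, Lemma~\ref{thtauberian3} --- produces a quantitative error term $o(x/\log x)$; they output, respectively, $\tau(x)=ax+b+o(1)$, $S(x)\sim ae^{x}$, or $S(x)\ll e^x$. From your estimate $\zeta(s)-a/(s-1)=O\bigl(\log\log\frac{1}{|s-1|}\bigr)$ near $s=1$ together with local pseudofunction behavior elsewhere, Theorem~\ref{thtauberian2} gives at most $N(x)\sim ax$; the much finer conclusion $N(x)=ax+o(x/\log x)$, i.e. $E(y)=o(1/y)$, does not follow. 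To get that decay you must estimate $N-ax$ directly via the convolution structure $\mathrm{d}N=\exp^{\ast_M}(\mathrm{d}\Pi)$. The paper does exactly this, and the argument runs smoothly there because the perturbation added to the continuous-prime measure is \emph{non-negative}, so $e^{c}x-N(x)$ can be sandwiched two-sidedly by $x\int_{x}^{\infty}\omega(u)/(u\log^{2}u)\,\mathrm{d}u$. Your signed perturbation forfeits this: one would have to control $|\mathrm{d}\mu|^{\ast_M n}$ term by term and sum the exponential series, which you neither carry out nor acknowledge needing.

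For part (ii) you supply no construction and explicitly concede that ``the real work lies'' there; this is a gap, not a proof. Worse, the preliminary reasoning you offer to rule out a class of candidates is incorrect. You claim that for a perturbation of the continuous primes, continuity of $\zeta(s)-a/(s-1)$ at $s=1$ forces convergence of $\int^{x}t^{-1}\log t\,\mathrm{d}(\Pi-\Pi_{0})(t)$. But that integral is (up to sign) the boundary value of $P'(s)$ at $s=1$, where $P(s)=\int x^{-s}\mathrm{d}(\Pi-\Pi_{0})(x)$, whereas continuity of $\zeta-a/(s-1)$ controls only $P(1^{+})$ --- one order of differentiation too few. The paper's Example~\ref{pnteex2} exploits precisely this gap: with $\mathrm{d}\Pi=\frac{1-u^{-1}}{\log u}\mathrm{d}u+\frac{f(\log u)}{\log^{2}u}\chi_{[A,\infty)}(u)\,\mathrm{d}u$ and $f(y)=g'(\log y)$, $g$ a sum of increasingly narrow non-negative unit bumps, one gets $\psi_{1}(x)-\log x=g(\log\log x)+c'+o(1)$, which oscillates, while $\int_{x}^{\infty}f(y)y^{-2}\mathrm{d}y\in L^{1}$ and hence (via a Kahane-type Wiener-algebra lemma) (\ref{thmainas3simplified}) holds. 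So a perturbation of the continuous primes \emph{does} work, contradicting your stated obstruction; the key device you miss is building $f$ as the derivative of a non-negative $L^{1}$ function with oscillating, non-integrable derivative.
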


Under the hypotheses (\ref{diamond-zhangcond1}) and (\ref{diamond-zhangcond2}) with $D$ as in Corollary \ref{cdiamond-zhang}, Diamond and Zhang were also able to show \cite{diamond-zhangbook} the equivalence between
\begin{equation} \label{eqlandaurelation}
 M(x) := \sum_{n_{k}\leq x} \mu(n_{k}) = o(x)
\end{equation}
and
\begin{equation} \label{eqlandausharprelation}
 m(x) := \sum_{n_{k}\leq x} \frac{\mu(n_{k})}{n_{k}} = o(1),
\end{equation}
with $\mu$ the Beurling analogue of the M\"obius function. We will also improve this result by using a weaker condition. Note that for rational primes the equivalence between (\ref{eqlandaurelation}), (\ref{eqlandausharprelation}), and the PNT was first established by Landau in 1911 (\cite{landau}, \cite[Sect. 6.2.7]{na}); because of that, we refer to them as the \emph{Landau relations}. It is worth noticing that the implication (\ref{eqlandausharprelation}) $\Rightarrow$ (\ref{eqlandaurelation}) holds unconditionally, as can easily be seen via integration by parts; 
 therefore, one only has to focus on the conditional converse.

\begin{theorem} \label{thmainmobius} (\ref{eqlandaurelation}) and the condition (\ref{thmainas3simplified}) for some $a>0$ imply the other Landau relation (\ref{eqlandausharprelation}).
\end{theorem}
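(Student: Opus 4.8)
The plan is to transfer everything to Laplace transforms and apply a complex Tauberian theorem for Laplace transforms with local pseudofunction boundary behavior, as in \cite{debruyne-vindasCT,korevaar2005}. Put $r(u):=e^{-u}M(e^{u})$. By (\ref{eqlandaurelation}) the function $r$ is bounded and $r(u)\to 0$ as $u\to\infty$. Partial summation gives $m(x)=M(x)/x+\int_{1}^{x}M(t)t^{-2}\,\mathrm{d}t$, so, since $M(x)/x\to 0$, proving (\ref{eqlandausharprelation}) is the same as proving that the improper integral $\int_{1}^{\infty}M(t)t^{-2}\,\mathrm{d}t=\int_{0}^{\infty}r(u)\,\mathrm{d}u$ converges; its value will then necessarily be $0$. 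The Mellin identity $\int_{1}^{\infty}M(t)t^{-s-2}\,\mathrm{d}t=\bigl((s+1)\zeta(s+1)\bigr)^{-1}$ ($\Re s>0$) identifies the Laplace transform of $r$ as
\[
\widehat{r}(s):=\int_{0}^{\infty}r(u)e^{-su}\,\mathrm{d}u=\frac{1}{(s+1)\zeta(s+1)},\qquad \Re s>0 .
\]
Since $r$ is bounded, by the Ingham--Karamata theorem in its local-pseudofunction form it suffices to prove that $\widehat{r}$ extends to the line $\Re s=0$ with local pseudofunction boundary behavior and is regular at $s=0$, and then $\int_{0}^{\infty}r=\widehat{r}(0)$. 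Thus the whole matter is reduced to the boundary behavior of $1/\zeta$ on the line $\Re s=1$.

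There are two inputs. The first, supplied by (\ref{eqlandaurelation}), is the general lemma: \emph{if $g\in L^{\infty}[0,\infty)$ and $g(u)\to 0$, then its Laplace transform has local pseudofunction boundary behavior on all of $\Re s=0$.} Indeed $g\mathbf{1}_{[0,T]}\to g$ uniformly as $T\to\infty$; each truncation lies in $L^{1}$, so its Laplace transform extends continuously to $\Re s=0$ and is there a local pseudofunction (continuous functions are); and the class of $L^{\infty}$-functions whose Laplace transform has this property is closed under uniform limits, because the associated boundary distributions then differ by $O(\|g-g\mathbf{1}_{[0,T]}\|_{\infty})$ in the pseudomeasure norm, in which the pseudofunctions form a closed subspace. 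Applied to $r$, this gives that $\widehat{r}$ has local pseudofunction boundary behavior on the whole line $\Re s=0$. The second input, supplied by (\ref{thmainas3simplified}), controls $\widehat{r}$ near $s=0$: from $\zeta(s)=s\int_{1}^{\infty}N(x)x^{-s-1}\,\mathrm{d}x$ one gets, for $\Re s>1$,
\[
\zeta(s)-\frac{as}{s-1}=s\int_{1}^{\infty}\frac{N(x)-ax}{x^{s+1}}\,\mathrm{d}x=s\int_{0}^{\infty}e^{-u}\bigl(N(e^{u})-ae^{u}\bigr)e^{-(s-1)u}\,\mathrm{d}u ,
\]
and (\ref{thmainas3simplified}) says precisely that $u\mapsto e^{-u}(N(e^{u})-ae^{u})$ lies in $L^{1}[0,\infty)$; hence the last integral extends continuously to $\Re s\ge 1$, so $(s-1)\zeta(s)$ is continuous on $\Re s\ge 1$ with value $a\neq 0$ at $s=1$. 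Thus $\zeta$ has a simple pole at $s=1$ and is zero-free near $s=1$ on $\Re s\ge 1$, which makes $\widehat{r}(s)=\bigl((s+1)\zeta(s+1)\bigr)^{-1}$ continuous in a neighbourhood of $s=0$ with $\widehat{r}(0)=0$. The Tauberian theorem now applies and yields $\int_{0}^{\infty}r(u)\,\mathrm{d}u=\widehat{r}(0)=0$, i.e.\ (\ref{eqlandausharprelation}).

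I expect the main difficulty to lie in the use of (\ref{eqlandaurelation}): one must turn the real, averaged statement $M(x)=o(x)$ into the complex-analytic assertion that $1/\zeta$ has local pseudofunction boundary behavior on $\Re s=1$, which is exactly the point of the lemma above; the delicate ingredients there are the precise meaning of ``pseudofunction boundary behavior'' for a Laplace transform and the closedness of the pseudofunctions inside the pseudomeasures, which makes the uniform-limit argument run. One also has to match the two inputs near $s=1$, where (\ref{thmainas3simplified}) gives genuine continuity rather than merely pseudofunction behavior, and to check that the hypotheses of the cited Tauberian theorem are met verbatim --- in particular that boundedness of $r$ is the only Tauberian side condition, and that what makes $\int_{0}^{\infty}r$ converge (with value $0$) is precisely the regularity of $\widehat{r}$ at the single point $s=0$, not just pseudofunction behavior there; this is where (\ref{thmainas3simplified}) is indispensable, since without it $\widehat{r}$ might acquire, say, a logarithmic-type singularity at $s=0$ and $\int_{0}^{\infty}r$ need not converge at all. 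A final routine point is to justify identifying $\lim_{s\to 1^{+}}\bigl(s\zeta(s)\bigr)^{-1}$ with the value of this convergent improper integral.
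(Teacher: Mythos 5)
Your proof is correct, and structurally it follows the same Tauberian blueprint the paper uses for the more general Theorem~\ref{thmmobius}: extract local pseudofunction boundary behaviour of $1/\zeta$ on $\Re e\: s=1$ from $M(x)=o(x)$, use the hypothesis on $N$ to control the behaviour at the point $s=1$ itself, and apply Theorem~\ref{thtauberian1} to a slowly decreasing primitive. The genuine difference lies in how the neighbourhood of $s=1$ is handled. The paper, working under the weaker assumption $E\ast K\in L^{1}(\mathbb{R})$, must pass through the Wiener division theorem (to obtain $\hat{E}\in A_{loc}$ near $0$) and then the multiplier Lemma~\ref{productlpfbb} to control $1/(s\zeta(s+1))$. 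You instead exploit the stronger hypothesis $E\in L^{1}(\mathbb{R})$ (which is exactly (\ref{thmainas3simplified})) to show that $\zeta(s)-as/(s-1)$ extends \emph{continuously} to $\Re e\: s\ge 1$, so that $(s-1)\zeta(s)$ is continuous and non-vanishing near $s=1$ and $1/\zeta$ is then pointwise continuous there --- no Wiener-algebra machinery is needed. You also bypass the appeal to the Abelian direction of Theorem~\ref{thtauberian2} by noting directly that $r(u)=e^{-u}M(e^{u})$ is bounded and tends to $0$, so that $\hat{r}$ is a global pseudofunction by definition. Both are legitimate simplifications that make the argument for this special case more elementary than the paper's.

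One point to tighten at the end. What Theorem~\ref{thtauberian1}, applied to $\tau(x)=\int_{0}^{x}r(u)\,\mathrm{d}u$ with $a=b=0$, actually requires is local pseudofunction boundary behaviour of $\mathcal{L}\{\tau;s\}=\widehat{r}(s)/s$, not merely of $\widehat{r}(s)$; and near $s=0$ this does not follow from continuity of $\widehat{r}$ with $\widehat{r}(0)=0$ alone (think of something behaving like $\sqrt{s}$). Your pole computation does in fact give the stronger statement: since $s\zeta(s+1)=a(s+1)+s(s+1)\int_{1}^{\infty}(N(x)-ax)x^{-s-2}\,\mathrm{d}x$ is continuous on $\Re e\: s\ge 0$ near $s=0$ and equals $a\neq 0$ at $s=0$, the function $\widehat{r}(s)/s=1/\bigl((s+1)\, s\zeta(s+1)\bigr)$ is itself continuous there. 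You should state this explicitly rather than invoking ``regularity of $\widehat{r}$ at the single point $s=0$''; as written it leaves a small gap in matching the hypotheses of the cited Tauberian theorem.
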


We point out that we have only stated here our main results in their simplest forms. In Section \ref{section sharp Mertens} and Section \ref{sectionotherequivalences} we will replace (\ref{thmainas1simplified}) and (\ref{thmainas3simplified}) by much weaker assumptions in terms of convolution averages. These convolution average versions of (\ref{thmainas1simplified}) and (\ref{thmainas3simplified}) express the fact that only the local behavior of the zeta function
$$
\zeta(s)=\sum_{k=0}^{\infty}\frac{1}{n^{s}_{k}}
$$
at $s=1$ is responsible for the equivalences under consideration. In Section \ref{pnte examples} we construct examples in order to give a proof of Proposition \ref{prop1pnteex}.

\section{Tauberian tools}
Our analysis in the next sections makes extensive use of complex Tauberian theorems for Laplace transforms, which we collect here together with some background material on related concepts for the reader's convenience. These Tauberian theorems are in terms of local pseudofunction boundary behavior \cite{debruyne-vindasCT,katznelson,korevaarbook,korevaar2005,s-v}, which turns out to be an optimal assumption on the Laplace transform, in the sense that it often leads to ``if and only if''  results. See also \cite{diamond-zhangbook,zhang2014} for an $L^{1}_{loc}$-approach to ``if and only if'' type complex Tauberian theorems. Additionally, we will also employ the Wiener division type theorem for non-quasianalytic Beurling algebras; we refer to \cite{beurling1938,korevaarbook} for the latter topic. 

Pseudofunctions are a special kind of  Schwartz distributions that arise in harmonic analysis \cite{benedettobook} and are defined via Fourier transform. The standard Schwartz test function spaces are denoted by $\mathcal{D}(\mathbb{R})$ and $\mathcal{S}(\mathbb{R})$, while $\mathcal{D}'(\mathbb{R})$ and $\mathcal{S}'(\mathbb{R})$ stand for their topological duals, the spaces of distributions and tempered distributions. The Fourier transform, normalized as $\hat{\varphi}(t)=\int_{-\infty}^{\infty}e^{-itx}\varphi(x)\:\mathrm{d}x,$ is defined as usual on $\mathcal{S}'(\mathbb{R})$ via duality. It is important to notice that if $f\in\mathcal{S}'(\mathbb{R})$ has support in $[0,\infty)$, its Laplace transform 
$\mathcal{L}\left\{f;s\right\}=\left\langle f(u),e^{-su}\right\rangle$ is well-defined, analytic on $\Re e\:s>0$, and has distributional boundary value $\hat{f}$ on $\Re e\:s=0$. See the textbooks \cite{schwartz,vladimirov} for distribution theory and \cite{estrada-kanwal,p-s-v} for asymptotic calculus with distributions. 

A tempered distribution $f\in\mathcal{S}'(\mathbb{R})$ is called a (global) \emph{pseudofunction} if $f=\hat{g}$, where $g\in L^{\infty}(\mathbb{R})$ and $\lim_{|x|\to\infty}g(x)=0$. We denote the space of pseudofunctions by $PF(\mathbb{R})$.
Given an open interval $I$, we say that a distribution $f$ is a \emph{local} pseudofunction on $I\subset\mathbb{R}$ if $f$ coincides with a pseudofunction on a neighborhood of each point of $I$. We write $f\in PF_{loc}(I)$. One can easily check \cite{korevaar2005} that $f\in PF_{loc}(I)$ if and only if
\begin{equation}
\label{GRLeq}
\widehat{\varphi f}(h)=\langle f(t), e^{-iht} \varphi(t) \rangle=o(1), \quad |h|\to\infty, \quad \mbox{for each }\varphi\in\mathcal{D}(I).
\end{equation}
The property (\ref{GRLeq}) can be regarded as a generalized Riemann-Lebesgue lemma.
In particular, $L^{1}_{loc}(I)\subset PF_{loc}(I)$ in view of the classical Riemann-Lebesgue lemma. 

Let $F(s)$ be analytic on the half-plane $\Re e\:s>\alpha$. We say that $F$ has local pseudofunction boundary behavior on the boundary (open) line segment $\alpha+iI$ if there is $f\in PF_{loc}(I)$ such that
\begin{equation}
\label{bveq}
\lim_{\sigma\to\alpha^{+}}\int_{-\infty}^{\infty}F(\sigma+it)\varphi(t)\mathrm{d}t=\left\langle f(t),\varphi(t)\right\rangle\ , \quad \mbox{for each } \varphi\in\mathcal{D}(I).
\end{equation}
Boundary behavior with respect to other distribution subspaces is defined analogously. We write in short $F(\alpha+it)=f(t)$ for boundary distributions in the sense of (\ref{bveq}). We emphasize that $L^1_{loc}$, continuous, or analytic extension are very special cases of local pseudofunction boundary behavior.

We call a function $\tau$ \emph{slowly decreasing} if for each $\varepsilon > 0$ there exists $\eta>0$ such that 
\[  \liminf_{x\to\infty}\inf_{h\in[0,\eta]}(\tau(x+h) - \tau(x)) > - \varepsilon.
\]
Note that every non-decreasing function is slowly decreasing.
The first Tauberian theorem that we state is a recent extension of the Ingham-Fatou-Riesz theorem \cite{ingham1935}, obtained by the authors in \cite{debruyne-vindasCT}.

\begin{theorem} \label{thtauberian1} Let $\tau\in L^{1}_{loc}(\mathbb{R})$ be slowly decreasing with $\operatorname*{supp}\tau\subseteq[0,\infty)$. Then,
$$\tau(x) =ax+b + o(1)$$ if and only if its Laplace transform converges for $\Re e \: s > 0$ and
\begin{equation*}
\mathcal{L}\{\tau;s\}  - \frac{a}{s^{2}} -\frac{b}{s}  
\end{equation*}
admits local pseudofunction boundary behavior on the line $\Re e \: s = 0$.   
\end{theorem}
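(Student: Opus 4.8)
The plan is to prove the two implications by quite different routes: the forward one is a soft computation, whereas the converse is the genuine Tauberian assertion and is where all the difficulty lies. For the forward direction, assume $\tau(x)=ax+b+o(1)$ and write $\tau(x)=(ax+b)H(x)+\rho(x)$, where $H$ is the Heaviside function, so that $\rho\in L^{\infty}(\mathbb{R})$, $\operatorname{supp}\rho\subseteq[0,\infty)$, and $\rho(x)\to0$. Then $\mathcal{L}\{\tau;s\}$ converges absolutely on $\Re e\,s>0$ and equals $a/s^{2}+b/s+\mathcal{L}\{\rho;s\}$, whence $\mathcal{L}\{\tau;s\}-a/s^{2}-b/s=\mathcal{L}\{\rho;s\}$ has boundary distribution $\widehat{\rho}$ on $\Re e\,s=0$; since $\rho$ is a bounded function vanishing at infinity, $\widehat{\rho}\in PF(\mathbb{R})\subseteq PF_{loc}(\mathbb{R})$ by the very definition of a pseudofunction. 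This already yields the (global, a fortiori local) pseudofunction boundary behavior, and uses neither slow decrease nor the convergence hypothesis.

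For the converse, set $\rho(x):=\tau(x)-(ax+b)H(x)$; this $\rho$ is slowly decreasing with $\operatorname{supp}\rho\subseteq[0,\infty)$, its Laplace transform $G(s):=\mathcal{L}\{\rho;s\}=\mathcal{L}\{\tau;s\}-a/s^{2}-b/s$ converges on $\Re e\,s>0$, and $G$ has a local pseudofunction boundary value $g\in PF_{loc}(\mathbb{R})$ on all of $\Re e\,s=0$; the goal is $\rho(x)=o(1)$. The first step I would take is the classical slow-decrease reduction: if $\int_{x}^{x+h}\rho(t)\,\mathrm{d}t=o(1)$ as $x\to\infty$ for every fixed $h>0$, then already $\rho(x)=o(1)$, because slow decrease allows one to control $\rho(x)$ from above and below by such short averages up to an arbitrarily small additive error. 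Since $\int_{x}^{x+h}\rho(t)\,\mathrm{d}t=(\rho*\mathbf{1}_{[-h,0]})(x)$, it therefore suffices to show that $(\rho*\mathbf{1}_{[-h,0]})(x)\to0$.

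The core is a harmonic-analytic argument converting the pseudofunction boundary behavior into decay. For kernels $\varphi$ with $\widehat{\varphi}\in\mathcal{D}(\mathbb{R})$, I would write $(\rho*\varphi)(x)=\lim_{\sigma\to0^{+}}\int_{0}^{\infty}\rho(v)e^{-\sigma v}\varphi(x-v)\,\mathrm{d}v$, expand $\varphi$ by Fourier inversion, interchange the order of integration, recognize the inner integral as $G(\sigma+it)$, and then let $\sigma\to0^{+}$: the distributional boundary behavior of $G$ turns the limit into a pairing of the form $\tfrac{1}{2\pi}\langle g(t),e^{ixt}\widehat{\varphi}(t)\rangle$, which tends to $0$ as $x\to\infty$ by the generalized Riemann--Lebesgue lemma \eqref{GRLeq}, since $g\in PF_{loc}(\mathbb{R})$ and $e^{ixt}\widehat{\varphi}(t)\in\mathcal{D}(\mathbb{R})$. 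To pass from these band-limited kernels to $\mathbf{1}_{[-h,0]}$, whose Fourier transform is not compactly supported, I would invoke the Wiener division theorem in an appropriate non-quasianalytic Beurling algebra $A_{\omega}$, the weight $\omega$ being chosen to accommodate the growth of $\rho$ permitted by the hypotheses and non-quasianalyticity being precisely the property that makes Wiener's $1/\widehat{\varphi}$ argument available.

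I expect the main obstacle to be the rigorous execution of this last paragraph: one has only a distributional (not pointwise) boundary value of $G$ and only conditional convergence of the defining Laplace integral, so the interchange of integrations, the passage $\sigma\to0^{+}$, and the identification of the limit as a pairing against $g$ all require care; in particular, a preliminary a priori growth bound on $\rho$---to be extracted from the convergence of $G$ on $\Re e\,s>0$ together with its boundary behavior near $s=0$---is likely needed to legitimize the convolutions. By contrast the forward direction and the slow-decrease reduction are routine.
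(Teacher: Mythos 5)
The paper does not contain a proof of Theorem~\ref{thtauberian1}; it is quoted as a black box from \cite{debruyne-vindasCT}, so there is no internal argument to compare yours against. On its own terms, your Abelian direction is essentially right---except that $\rho=\tau-(ax+b)H$ need not lie in $L^{\infty}(\mathbb{R})$, since $\tau$ is only assumed locally integrable; one splits off a compactly supported $L^{1}$ piece, whose Fourier transform is continuous and hence trivially a local pseudofunction. The architecture of your Tauberian direction (slow-decrease reduction, then $(\rho\ast\varphi)(x)\to0$ for band-limited $\varphi$ via the boundary pairing and the generalized Riemann--Lebesgue lemma \eqref{GRLeq}, then upgrading to $(\rho\ast\chi_{[-h,0]})(x)\to0$) is also the standard and correct one.

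The genuine gap is the a priori growth bound on $\rho$, which you flag as ``likely needed'' but do not supply. It is not a technical afterthought: neither the Fubini interchange and the $\sigma\to0^{+}$ passage in the identity $(\rho\ast\varphi)(x)=\tfrac{1}{2\pi}\langle g(t),e^{ixt}\hat{\varphi}(t)\rangle$, nor the final approximation of $\chi_{[-h,0]}$, can be carried out without knowing in advance that $\rho\in L^{\infty}(\mathbb{R})$ (or at least that $\rho$ has controlled polynomial growth). In \cite{debruyne-vindasCT} this is secured by a separate boundedness lemma---the analogue, for slowly decreasing functions, of the present paper's Lemma~\ref{thtauberian3}---which converts local pseudomeasure boundary behavior near $s=0$ into $\rho=O(1)$; that lemma is the heart of the theorem and must be established first. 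Once $\rho\in L^{\infty}$ is available, moreover, the Wiener division theorem is the wrong tool for your final step: you do not have a single kernel $K$ with nowhere-vanishing Fourier transform satisfying $(\rho\ast K)(x)\to0$, only the family of band-limited $\varphi$'s, each of whose transforms vanishes off a compact set, so the $1/\hat{\varphi}$ factorization is unavailable. What works instead is plain $L^{1}$-approximation: band-limited functions are dense in $L^{1}(\mathbb{R})$, and $\|\rho\ast\chi_{[-h,0]}-\rho\ast\varphi\|_{L^{\infty}}\leq\|\rho\|_{L^{\infty}}\|\chi_{[-h,0]}-\varphi\|_{L^{1}}$ closes the argument; the non-quasianalytic Beurling algebras you invoke would only be relevant if the a priori bound were merely polynomial.
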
  
We point out the ``if'' direction in Theorem \ref{thtauberian1} requires the Tauberian condition that $\tau$ is slowly decreasing, while the ``only if'' part is of Abelian character and does not require the Tauberian condition.

We will also employ the following distributional version of the Wiener-Ikehara theorem, due to Korevaar \cite{korevaar2005}. (It can be deduced from Theorem \ref{thtauberian1}, see \cite[Thm.~5.4]{debruyne-vindasCT}.)

\begin{theorem} \label{thtauberian2} Let $S$ be a non-decreasing function having support in $[0,\infty)$. Then, 
$$S(x)\sim ae^{x}$$
 if and only if $\mathcal{L}\{\mathrm{d}S;s\} = \int^{\infty}_{0^{-}} e^{-sx} \mathrm{d}S(x)$ converges for $\Re e \: s > 1$
and 
\begin{equation*}
  \mathcal{L}\{\mathrm{d}S;s\}- \frac{a}{s-1}
\end{equation*}
 admits local pseudofunction boundary behavior on the line $\Re e \: s = 1$.   
\end{theorem}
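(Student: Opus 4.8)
The plan is to obtain (\ref{eqlandausharprelation}) from a single application of Theorem \ref{thtauberian1} to a convenient antiderivative of $M(e^{u})e^{-u}$. The hypothesis $M(x)=o(x)$ will play a double role: it supplies the Tauberian (slow decrease) side, and, through the Abelian half of Theorem \ref{thtauberian1}, it yields the boundary behavior of $1/\zeta$ away from $s=1$; the condition (\ref{thmainas3simplified}) is needed only to analyze $\zeta$ near $s=1$. First I would reduce the statement. Partial summation gives
\[
m(x)=\frac{M(x)}{x}+\int_{1}^{x}\frac{M(t)}{t^{2}}\,\mathrm{d}t,
\]
and since $M(x)=o(x)$ the first term is $o(1)$, so it suffices to show that $\tau(y):=\int_{0}^{y}M(e^{u})e^{-u}\,\mathrm{d}u$ (extended by $0$ for $y<0$) satisfies $\tau(y)=o(1)$. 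A Fubini computation together with the Dirichlet series identity $1/\zeta(w)=\sum_{k}\mu(n_{k})n_{k}^{-w}=w\int_{1}^{\infty}M(t)t^{-w-1}\,\mathrm{d}t$ (valid for $\Re e\, w>1$, where convergence and the non-vanishing of $\zeta$ follow from $M(x)=O(x)$, $\pi(x)\le N(x)$, and the fact that (\ref{thmainas3simplified}) makes $\int_{1}^{\infty}N(t)t^{-w-1}\,\mathrm{d}t$ converge for $\Re e\, w>1$) yields
\[
\mathcal{L}\{\tau;s\}=\frac{1}{s}\int_{1}^{\infty}M(t)t^{-s-2}\,\mathrm{d}t=\frac{1}{s(s+1)\zeta(s+1)},\qquad \Re e\, s>0.
\]
Moreover $\tau$ is continuous, has support in $[0,\infty)$, and satisfies $|\tau(y+h)-\tau(y)|\le h\sup_{u\ge y}|M(e^{u})e^{-u}|\to 0$ as $y\to\infty$, uniformly for $h$ in any bounded interval; hence $\tau$ is slowly decreasing and $\mathcal{L}\{\tau;s\}$ converges for $\Re e\, s>0$. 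By Theorem \ref{thtauberian1} applied with $a=b=0$, it therefore remains only to verify that $\mathcal{L}\{\tau;s\}$ has local pseudofunction boundary behavior on the whole line $\Re e\, s=0$.

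Near $s=0$ I would use (\ref{thmainas3simplified}). Writing $R(t)=N(t)-at$, for $\Re e\, w>1$ one has $\zeta(w)-\frac{a}{w-1}=a+w\int_{1}^{\infty}R(t)t^{-w-1}\,\mathrm{d}t$, and since the integrand is dominated on $\Re e\, w\ge 1$ by the integrable function $|R(t)|t^{-2}$, this extends continuously to a neighborhood of $w=1$ in $\{\Re e\, w\ge 1\}$. Consequently $s(s+1)\zeta(s+1)=(s+1)\bigl(a+s[\zeta(s+1)-a/s]\bigr)\to a\neq 0$ as $s\to 0$, so $\mathcal{L}\{\tau;s\}$ itself extends continuously to a neighborhood of $s=0$ in $\{\Re e\, s\ge 0\}$; in particular it has local pseudofunction boundary behavior there, and there is no polar term at $s=0$, which is why the constant produced by Theorem \ref{thtauberian1} must be $b=0$.

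Away from $s=0$ I would use $M(x)=o(x)$ directly. Since $\sigma(u):=M(e^{u})e^{-u}$ satisfies $\sigma(u)=0\cdot u+0+o(1)$, the Abelian (``only if'') part of Theorem \ref{thtauberian1}, which requires no Tauberian hypothesis, shows that $\mathcal{L}\{\sigma;s\}=\tfrac{1}{(s+1)\zeta(s+1)}$ has local pseudofunction boundary behavior on all of $\Re e\, s=0$. Because multiplication by a smooth function preserves the class $PF_{loc}$ — immediate from the generalized Riemann-Lebesgue lemma (\ref{GRLeq}) — and $s\mapsto\frac{1}{s(s+1)}$ is smooth on $\Re e\, s=0$ except at $s=0$, the identity $\mathcal{L}\{\tau;s\}=\frac{1}{s(s+1)}\mathcal{L}\{\sigma;s\}$ transfers this local pseudofunction boundary behavior to $\mathcal{L}\{\tau;s\}$ on $(\Re e\, s=0)\setminus\{0\}$. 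Patching with the continuous extension near $s=0$ gives the desired boundary behavior on all of $\Re e\, s=0$, and Theorem \ref{thtauberian1} then yields $\tau(y)=o(1)$, i.e.\ $\int_{1}^{x}M(t)t^{-2}\,\mathrm{d}t\to 0$, whence $m(x)=o(1)$.

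The routine-but-delicate ingredients are the Laplace–Dirichlet bookkeeping (the convergence and non-vanishing of $\zeta$ for $\Re e\, s>1$, all consequences of $M(x)=O(x)$ and (\ref{thmainas3simplified})) and the standard transfer of local pseudofunction boundary behavior through smooth multipliers and through the patching at $s=0$. The genuine obstacle one usually faces in such arguments — verifying the Tauberian slow-decrease condition — is essentially free here precisely because we work with the antiderivative $\tau$ of the $o(1)$ function $M(e^{u})e^{-u}$ rather than with $m$ directly; this is the key structural idea of the proof.
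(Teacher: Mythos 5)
Your proposal does not prove the statement under review. The statement is Theorem~\ref{thtauberian2}, the distributional (Korevaar) form of the Wiener--Ikehara theorem: for an arbitrary non-decreasing $S$ supported in $[0,\infty)$, the asymptotic $S(x)\sim ae^{x}$ is equivalent to convergence of $\mathcal{L}\{\mathrm{d}S;s\}$ on $\Re e\:s>1$ together with local pseudofunction boundary behavior of $\mathcal{L}\{\mathrm{d}S;s\}-a/(s-1)$ on $\Re e\:s=1$. What you have written is instead an argument for Theorem~\ref{thmainmobius}, namely that $M(x)=o(x)$ together with (\ref{thmainas3simplified}) implies $m(x)=o(1)$: every object in your text ($M$, $m$, $\zeta$, the hypothesis (\ref{thmainas3simplified}), the target (\ref{eqlandausharprelation})) belongs to that theorem, and nowhere do you treat a general non-decreasing $S$, establish either implication of the stated equivalence, or produce the asymptotic $S(x)\sim ae^{x}$. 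For what it is worth, the paper itself does not prove Theorem~\ref{thtauberian2}; it quotes it from Korevaar and remarks that it can be deduced from Theorem~\ref{thtauberian1} (see the cited reference for the details). The deduction goes by applying Theorem~\ref{thtauberian1} to the non-decreasing (hence slowly decreasing) function $\tau(x)=\int_{0^{-}}^{x}e^{-u}\,\mathrm{d}S(u)$, whose Laplace transform is $\mathcal{L}\{\mathrm{d}S;s+1\}/s$; one shows that the hypothesis on $\mathcal{L}\{\mathrm{d}S;s\}-a/(s-1)$ yields the boundary behavior required for $\mathcal{L}\{\tau;s\}-a/s^{2}-b/s$ for a suitable constant $b$, concludes $\tau(x)=ax+b+o(1)$, and then recovers $S(x)=\int_{0^{-}}^{x}e^{u}\,\mathrm{d}\tau(u)\sim ae^{x}$ by integration by parts; the converse (Abelian) direction follows from the corresponding Abelian half of Theorem~\ref{thtauberian1}. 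None of these steps appears in your proposal.

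As a secondary remark: read as a proof of Theorem~\ref{thmainmobius}, your argument is essentially sound and close to the paper's proof of the more general Theorem~\ref{thmmobius}. The paper works with $m(e^{x})$ directly (checking it is slowly decreasing from $M(x)=o(x)$) and, under the weaker convolution hypothesis $E\ast K\in L^{1}$, must invoke the Wiener division theorem to get $A_{loc}$-boundary behavior of $\zeta(s+1)-a/s$ near $s=0$ and Lemma~\ref{productlpfbb} to multiply; your version exploits the stronger pointwise hypothesis $E\in L^{1}$ to get continuity of $s\zeta(s+1)$ up to the boundary near $s=0$ by dominated convergence, which lets you avoid the division and multiplier lemmas there. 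But this is a different theorem from the one you were asked to prove.
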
 
Similarly as for Theorem \ref{thtauberian1}, one does not need that $S$ is non-decreasing for the ``only if'' Abelian direction of Theorem \ref{thtauberian2}. 

The ensuing lemma is very useful to conclude an $O(e^{x})$-bound from the local behavior of the Laplace transform just near $s=1$; its proof is simple, see \cite[Thm. 3.1]{debruyne-vindasCT}. It is in terms of local pseudomeasure boundary behavior. The space of (global) pseudomeasures is $PM(\mathbb{R})=\mathcal{F}(L^{\infty}(\mathbb{R}))$ and we of course have the inclusion $PF(\mathbb{R})\subset PM(\mathbb{R})$; we define local pseudomeasures and local pseudomeasure boundary behavior in the same way as it has been done for the pseudofunction analogues. We use the notation $PM_{loc}(I)$ for the space of local pseudomeasures on an open interval.

\begin{lemma}
\label{thtauberian3}
Let $S$ be non-decreasing, have support in $[0,\infty)$, and have convergent Laplace-Stieltjes transform for $\Re e \: s > 1$. If $\mathcal{L}\{\mathrm{d}S;s\}$ has local pseudomeasure boundary behavior on a line segment of $\Re e\: s=1$ containing the point $s=1$, then $S(x)\ll e^{x}.$
\end{lemma}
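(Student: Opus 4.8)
The plan is to reduce the statement to the corresponding localized form of the Wiener–Ikehara theorem (or rather, to the underlying mechanism behind Theorem \ref{thtauberian2}) by a standard smoothing-plus-truncation argument. Since $S$ is non-decreasing with support in $[0,\infty)$ and its Laplace–Stieltjes transform $G(s):=\mathcal{L}\{\mathrm dS;s\}$ converges for $\Re e\:s>1$, the first thing I would record is the trivial Abelian fact that $G$ is analytic on $\Re e\:s>1$ with $(s-1)G(s)$ bounded near the real axis only if $S$ is already $O(e^x)$; so the content is to extract this bound from the \emph{local} pseudomeasure behavior on a segment $1+iI$ with $0\in I$. Fix such an interval $I$ and pick a fixed real-valued $\varphi\in\mathcal D(I)$ with $\varphi\geq 0$ and $\varphi(0)>0$; by scaling we may arrange $\hat\varphi$ to be concentrated enough. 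The key inequality is that for any $T>0$,
\[
e^{-(1+\delta)(x+h)}\bigl(S(x+h)-S(x)\bigr)\ \leq\ \text{(a convolution of $e^{-(1+\delta)t}\mathrm dS$ against a nonnegative kernel built from $\varphi$)},
\]
so that, after integrating against $e^{-iht}$, the left side is controlled by $|\widehat{\varphi\,G_\delta}(h)|$ where $G_\delta(t)=G(1+\delta+it)$. The monotonicity of $S$ is exactly what converts a bound on a smoothed/averaged quantity into a pointwise bound on increments of $S$.

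Concretely, the steps I would carry out are: (1) Write $G(1+\delta+it)$ as the Fourier transform (in $t$) of the finite measure $e^{-(1+\delta)u}\mathrm dS(u)$ restricted to $u\geq 0$; the local pseudomeasure hypothesis says that, as $\delta\to0^+$, $\varphi(t)G(1+\delta+it)$ converges weakly to $\varphi(t)g(t)$ for some $g\in PM_{loc}(I)$, i.e. $\varphi g=\hat\mu$ for a bounded function $\mu$. (2) Take inverse Fourier transforms: the function $x\mapsto \int \varphi(t) e^{(1+\delta)x}e^{ixt}G(1+\delta+it)\,\mathrm dt$ equals $(\check\varphi * (e^{(1+\delta)\cdot}\mathrm dS))(x)$ up to constants, which by nonnegativity of a suitably chosen $\varphi$ (take $\varphi=|\psi|^2$-type so that $\check\varphi\geq0$, or rather choose $\varphi$ so that $\check\varphi$ is nonnegative on a fixed window) dominates $c\,e^{x}\bigl(S(x)-S(x-\eta)\bigr)$ for a fixed small $\eta>0$ and a fixed $c>0$, uniformly in $\delta$. (3) Bound the right-hand side: as $\delta\to0^+$ it tends to $(\check\varphi*\mu)(x)\cdot(\text{const})$ in the appropriate sense, and since $\mu\in L^\infty$ and $\check\varphi\in L^1$, this is $O(1)$ uniformly in $x$; combining with (2) gives $S(x)-S(x-\eta)=O(e^{-x})\cdot O(e^{x})$... here one must be careful with the normalization, but the upshot is $e^{-x}\bigl(S(x)-S(x-\eta)\bigr)=O(1)$. (4) Sum a geometric-type telescoping $S(x)=\sum_{j\geq 0}\bigl(S(x-j\eta)-S(x-(j+1)\eta)\bigr)$, using $S\geq 0$ and support in $[0,\infty)$, together with the bound $S(x-j\eta)-S(x-(j+1)\eta)\ll e^{x-j\eta}$, to obtain $S(x)\ll e^x\sum_{j\geq0}e^{-j\eta}\ll e^x$.

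The main obstacle, and the point requiring genuine care rather than routine manipulation, is step (2)–(3): making the inversion rigorous and uniform in $\delta$. The subtlety is that $G(1+\delta+it)$ is not globally integrable in $t$, so one cannot naively take a full inverse Fourier transform; one must localize with the fixed cutoff $\varphi$ and argue that the truncation error — coming from the tails where $\varphi$ vanishes — is harmless because those frequencies contribute, after multiplication by $e^{(1+\delta)x}$ and integration against the monotone measure $\mathrm dS$, only to increments $S(x+h)-S(x)$ with $h$ outside a fixed small window, which one drops by positivity. Equivalently, one needs the standard trick of choosing $\varphi$ with $\check\varphi$ real, nonnegative, and $\geq 1$ on $[0,\eta]$, so that only a one-sided (nonnegative) increment of $S$ survives and no cancellation can spoil the estimate. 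Once this choice is fixed, the passage to the limit $\delta\to0^+$ is justified by the very definition of pseudomeasure boundary behavior (\ref{bveq}) tested against the single fixed function $\varphi$, and everything else is bookkeeping. I expect the whole argument to be short precisely because it is quoted from \cite[Thm. 3.1]{debruyne-vindasCT}; the proof here would mostly consist of specializing that general principle to the present normalization on the line $\Re e\:s=1$.
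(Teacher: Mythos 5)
Your argument is correct, and it is the standard proof of this type of ``$O$-Tauberian'' statement: test the boundary distribution against $\varphi(t)e^{ixt}$, choose $\varphi$ of Fej\'er type so that $\check\varphi\geq 0$ with $\check\varphi\geq c>0$ on some window $[0,\eta]$, use monotonicity of $S$ to drop all but the increment over $[x-\eta,x]$, note that the pseudomeasure hypothesis makes $\widehat{\varphi g}$ a bounded function, and telescope. The paper itself does not prove the lemma but only cites \cite[Thm.~3.1]{debruyne-vindasCT}; your outline is a faithful reconstruction of that argument.

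Two small points of bookkeeping. First, in step (2) the convolution should read $(\check\varphi\ast(e^{-(1+\delta)\cdot}\,\mathrm{d}S))(x)$ (negative exponent) before the extra factor $e^{(1+\delta)x}$ is pulled out; equivalently, the whole quantity is $2\pi\int_0^\infty e^{(1+\delta)(x-u)}\check\varphi(x-u)\,\mathrm{d}S(u)$. Second, at the place where you hedge (``nonnegative on a fixed window''), you do in fact need $\check\varphi\geq 0$ on all of $\R$ --- otherwise the positivity argument that lets you drop the complementary part of the integral fails --- but this is achieved exactly by the Fej\'er-type choice $\varphi=\psi\ast\tilde\psi$ with $\tilde\psi(t)=\overline{\psi(-t)}$ and $\operatorname*{supp}\psi$ small enough that $\operatorname*{supp}\varphi\subset I$; you do mention this choice at the end, so the gap is only expository, not mathematical.
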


We also need to discuss multipliers for local pseudofunctions.
From the generalized Riemann-Lebesgue lemma (\ref{GRLeq}), it is already clear that smooth functions are multipliers for the space $PF_{loc}(I)$. More general multipliers can be found if we employ the Wiener algebra $A(\mathbb{R})$(=$\mathcal{F}(L^{1}(\mathbb{R}))$). In fact, the multiplication of $f\in PF(\mathbb{R})$ with $g\in A(\mathbb{R})$ can be canonically defined by convolving in the Fourier side and then taking inverse Fourier transform; we obviously have $fg\in PF(\mathbb{R})$. By going to localizations (and gluing then with partitions of the unity), the multiplication $fg\in PF_{loc}(I)$ can be extended for $f\in PF_{loc}(I)$ and $g\in A_{loc}(I)$, where the latter membership relation means that $\varphi g\in A(\mathbb{R})$ for all $\varphi\in \mathcal{D}(I)$. 

The next lemma deals with a simple situation in which the just defined multiplication agrees with the operation of taking distributional boundary values of the product of two Laplace transforms. The space $\mathcal{B}'(\mathbb{R})$ denotes the Schwartz space of bounded distributions, it consists of the linear span of all (distributional) derivatives of $L^{\infty}$-functions; likewise, $\mathcal{D}'_{L^1}(\mathbb{R})$ stands for the Schwartz space of integrable distributions, spanned by derivatives of $L^1$-functions. In particular, $L^{\infty}(\mathbb{R})\subset \mathcal{B}'(\mathbb{R})$ and $L^{1}(\mathbb{R})\subset \mathcal{D}'_{L^1}(\mathbb{R})$. Properties of these spaces are discussed in detail in Schwartz' book \cite[Sect. VI.8]{schwartz} (see also \cite[Sect. 5]{d-p-v1}).
\begin{lemma}\label{productlpfbb}
Let $F(s)=\mathcal{L}\{f;s\}$ and $G(s)=\mathcal{L}\{g;s\}$, where $f,g\in\mathcal{S}'(\mathbb{R})$ have supports in $[0,\infty)$. Suppose that $F(s)$ has local pseudofunction boundary behavior on the line segment $iI$ of $\Re e\:s=0$ and $G$ has $A_{loc}$-boundary behavior on $iI$. If either $f\in \mathcal{B}'(\mathbb{R})$ or $g\in \mathcal{D}'_{L^{1}}(\mathbb{R})$, then $F(s)G(s)$ has local pseudofunction boundary behavior on $iI$, namely, the local pseudofunction $F(it)\cdot G(it)\in PF_{loc}(I)$. 
\end{lemma}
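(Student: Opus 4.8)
The plan is to reduce the statement to a genuinely local claim and then to the already-understood global situation via a partition of unity. First I would fix an arbitrary point $t_0\in I$ and a test function $\varphi\in\mathcal{D}(I)$ supported near $t_0$; since local pseudofunction behavior is checked point by point, it suffices to produce, for each such $t_0$, a cutoff $\chi\in\mathcal{D}(I)$ equal to $1$ near $\operatorname{supp}\varphi$ and to show that the boundary value of $F(s)G(s)$ against $\varphi$ equals $\langle (\chi f)\widehat{\ }\,{*}\,(\chi g)\widehat{\ },\varphi\rangle$ in an appropriate sense — in other words, that only the localizations $\chi f$ and $\chi g$ matter. The contributions of $(1-\chi)f$ and $(1-\chi)g$ should be handled by noting that their Laplace transforms are analytic (indeed smooth up to the boundary) on a neighborhood of $\operatorname{supp}\varphi$ after multiplication by $\varphi$, using that $\varphi$ and the ``far'' part of the distribution have disjoint supports on the boundary line; products with such smooth factors preserve local pseudofunction behavior by the generalized Riemann–Lebesgue lemma (\ref{GRLeq}).

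Once the problem is localized, I would invoke the hypotheses: $\chi f$ has a global pseudofunction boundary value $\widehat{\chi f}\in PF(\mathbb{R})$ and $\chi g$ has a global boundary value lying in $A(\mathbb{R})$, by definition of $A_{loc}$-boundary behavior. The multiplication $PF(\mathbb{R})\times A(\mathbb{R})\to PF(\mathbb{R})$ is well defined (convolve on the Fourier side, then invert), so the product $\widehat{\chi f}\cdot\widehat{\chi g}\in PF(\mathbb{R})$. The core of the argument is then to identify $\lim_{\sigma\to0^+}\int \mathcal{L}\{\chi f;\sigma+it\}\,\mathcal{L}\{\chi g;\sigma+it\}\varphi(t)\,\mathrm{d}t$ with $\langle \widehat{\chi f}\cdot\widehat{\chi g},\varphi\rangle$. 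This is where the side condition $f\in\mathcal{B}'(\mathbb{R})$ or $g\in\mathcal{D}'_{L^1}(\mathbb{R})$ enters: it guarantees that the pointwise product of the two Laplace transforms (a priori only defined off the line) converges in $\mathcal{D}'$ to the correct convolution on the line. Concretely, if $g\in\mathcal{D}'_{L^1}(\mathbb{R})$ then $\mathcal{L}\{\chi g;\sigma+\cdot\}\to\widehat{\chi g}$ in a strong enough topology (e.g. in $\mathcal{D}_{L^1}'$, or after pairing, uniformly on compacts together with an $L^1$-type control), while $\mathcal{L}\{\chi f;\sigma+\cdot\}$ is bounded in $\mathcal{B}'$; the duality $\mathcal{B}'$–$\mathcal{D}_{L^1}'$ (Schwartz, \cite[Sect. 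VI.8]{schwartz}) then lets me pass to the limit under the integral. The case $f\in\mathcal{B}'$, $g$ arbitrary with $A_{loc}$-boundary behavior is symmetric, using that $A(\mathbb{R})\subset\mathcal{D}_{L^1}'(\mathbb{R})$ and that $A(\mathbb{R})$-convergence of $\mathcal{L}\{\chi g;\sigma+\cdot\}$ is even stronger.

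The main obstacle I anticipate is precisely this interchange-of-limits step: justifying that the boundary value of a product of Laplace transforms is the ``product'' (in the $PF$–$A$ pairing) of the boundary values. Away from the line everything is classical multiplication of analytic functions, but on the line one side is only a distribution, so one needs the right functional-analytic framework — the structure theorem for $\mathcal{B}'$ and $\mathcal{D}_{L^1}'$, the fact that convolution $\mathcal{D}_{L^1}'\times\mathcal{B}'\to\mathcal{B}'$ is hypocontinuous, and the observation that $\sigma\mapsto\mathcal{L}\{\chi f;\sigma+\cdot\}$ is a bounded net in $\mathcal{B}'$ converging in $\mathcal{B}'$ (weak-$*$) to $\widehat{\chi f}$. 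Modulo that, the partition-of-unity gluing and the disposal of the ``far'' parts are routine. I would also remark that the hypothesis is essentially sharp: without a membership condition placing one of $f,g$ in a space dual to the other's natural home, the pointwise product need not even make distributional sense on the boundary, so some such assumption is unavoidable.
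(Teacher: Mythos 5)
Your plan is sound in outline and you correctly single out the real difficulty (making sense of the product of boundary values and matching it with the boundary value of the product), but the paper takes a more concrete and shorter route that avoids the functional-analytic machinery you invoke. Rather than cutting off $f$ and $g$ on the boundary side and then arguing via $\mathcal{B}'$--$\mathcal{D}'_{L^1}$ duality and hypocontinuity, the paper works directly with the criterion (\ref{GRLeq}): it suffices to show $(f\ast g\ast\varphi)(h)\to 0$ as $|h|\to\infty$ for $\hat\varphi\in\mathcal{D}(I)$. The key trick is the idempotent factorization $\varphi=\varphi\ast\phi$ with $\hat\phi\in\mathcal{D}(I)$ equal to $1$ on $\operatorname{supp}\hat\varphi$; the side condition $f\in\mathcal{B}'(\mathbb{R})$ or $g\in\mathcal{D}'_{L^1}(\mathbb{R})$ is used only to justify associativity of the triple convolution (via Vladimirov), after which $(f\ast g\ast\varphi)=(f\ast\varphi)\ast(g\ast\phi)$, and one is done since $f\ast\varphi\in C_0$ (by $PF_{loc}$) and $g\ast\phi\in L^1$ (by $A_{loc}$). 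This achieves the localization that you want for free — no partition of unity, no cutoff of $f$ and $g$, and no convergence-of-Laplace-transforms argument is needed, because the distributional boundary value of $FG$ is simply $\widehat{f\ast g}$. A second, smaller point: your notation $\widehat{\chi f}$ is ambiguous — you apparently mean $\chi\cdot\hat f$ (a cutoff of the boundary distribution), not the Fourier transform of a space-side localization of $f$; with the latter reading, $\chi f$ would be compactly supported and its Fourier transform entire rather than a pseudofunction, and its Laplace transform would no longer be supported in $\Re e\,s\geq 0$ as you seem to assume in the limit step. So the proposal is salvageable but leaves the central interchange-of-limits step essentially as a promissory note, whereas the paper's factorization converts it into an elementary statement about the convolution of a $C_0$ function with an $L^1$ function.
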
 
\begin{proof} Naturally, $F(s)G(s)$ has distributional boundary value $\widehat {f\ast g}$; what we have to verify is that if $\varphi \in\mathcal{S}(\mathbb{R})$ is such that $\hat{\varphi}\in\mathcal{D}(I)$, then $f\ast g\ast\varphi \in L^{\infty}(\mathbb{R})$ and tends to 0 at $\pm\infty$. Now, we can factor $\varphi=\varphi \ast \phi $ with $\hat{\phi}\in \mathcal{D}(I)$ being equal to 1 in a neighborhood of $\operatorname*{supp}\hat{\varphi}$. Note that in general the convolution of distributions may not be associative; however, in this case our hypotheses guarantee the conditions from \cite[Sect. 4.2.8, p. 56]{vladimirov} for associativity. So, the smooth function $(f\ast g\ast\varphi)(h)=((f\ast \varphi )\ast (g\ast \phi))(h)\to0$ as $|h|\to\infty$, since $g\ast\phi \in L^{1}(\mathbb{R})$ and $\lim_{|x|\to\infty}(f\ast \varphi)(x)=0$. Furthermore, $F(s)G(s)$ has boundary value $(\hat{\varphi} \hat{f})\cdot (\hat{\phi} \hat{g})$ on $iI$.  
\end{proof}

\section{Sharp Mertens relation and the PNT}\label{section sharp Mertens}
We now prove Theorem \ref{thmainsimplified}. As anticipated in the Introduction, we will actually show a more general result. 

Our considerations apply to non necessarily discrete number systems. In a broader sense \cite{beurling,diamond-zhang}, a Beurling generalized number system is merely a pair of non-decreasing right continuous functions $N$ and $\Pi$ with $N(1)=1$ and $\Pi(1)=0$, both having support in $[1,\infty)$, and linked via the zeta function relation
\begin{equation}
\label{defzetaextended}
\zeta(s) :=\int^{\infty}_{1^{-}} x^{-s}\mathrm{d}N(x)= \exp\left(\int^{\infty}_{1}x^{-s}\mathrm{d}\Pi(x)\right),
\end{equation}
with convergence of both integrals on some half-plane. This implies that both $N$ and $\Pi$ have at most polynomial growth, and they determine one another in a unique fashion. Note that then (\ref{defzetaextended}) becomes equivalent to $\mathrm{d}N=\exp^{\ast_{M}}(\mathrm{d}\Pi)$, where the exponential is taken with respect to the multiplicative convolution of measures \cite{diamond1} (see also \cite[Chap.~2 and Chap.~3]{bateman-diamondbook} and \cite{diamond-zhangbook}). 

The Chebyshev function $\psi$ and the function $\psi_1$ involved in the sharp Mertens relation still make sense in this context if we employ the Stieltjes integral definitions in (\ref{ibpneq1}) and (\ref{ibpneq3}). As in classical number theory, the Chebyshev function has Mellin-Stieltjes transform
\begin{equation}
\label{treq2}
\int^{\infty}_{1} x^{-s} \mathrm{d}\psi(x) = -\frac{\zeta'(s)}{\zeta(s)}.
\end{equation}

 Our goal in this section is to provide a proof of the ensuing theorem. Our conditions for the equivalence between the PNT and the sharp Mertens relation are in terms of convolution averages of the remainder function $E$ in
\begin{equation}
\label{approximationN1}
N(x)=ax + xE(\log x), \quad x\geq 1,
\end{equation}
where we set $E(y)=0$ for $y<0$. In the sequel $\ast$ always denotes additive convolution.

\begin{theorem} \label{thmain} Suppose that a generalized number system satisfies the PNT. 
Let $K_1$ and $K_2$ be two kernels such that $\hat{K}_{j}(0)\neq0 $ and $\int_{-\infty}^{\infty}(1+|y|)^{1+\varepsilon}|K_{j}(y)|<\infty$ for some $\varepsilon>0$. If the remainder function $E$ determined by (\ref{approximationN1}), where $a>0$, satisfies
\begin{equation}
\label{thmainas1}
(E\ast K_1)(y)=o\left(\frac{1}{y}\right), \quad y\to\infty,
\end{equation}
and
\begin{equation}
\label{thmainas3} 
E \ast K_2 \in L^{1}(\mathbb{R}),            
\end{equation}
then the sharp Mertens relation 
\begin{equation}
\label{sharpMertens}
\psi_{1}(x) = \int^{x}_{1} \frac{\mathrm{d}\psi(t)}{t} = \log x + c + o(1)
\end{equation}
holds as well, with constant $c =-b/a - 1$ where $b = (\hat{K}_2(0))^{-1} \int_{-\infty}^{\infty} (E \ast K_2)(y)\mathrm{d}y$.                   
\end{theorem}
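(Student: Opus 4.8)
The plan is to deduce the sharp Mertens relation from the Ingham--Fatou--Riesz type Theorem~\ref{thtauberian1}. Set $\tau(u):=\psi_{1}(e^{u})$, so that $\operatorname{supp}\tau\subseteq[0,\infty)$ and (\ref{sharpMertens}) is equivalent to $\tau(u)=u+c+o(1)$. The PNT gives $\psi(x)\sim x$, hence $\psi_{1}(x)=\psi(x)/x+\int_{1}^{x}\psi(t)t^{-2}\,\mathrm{d}t\sim\log x$; thus $\tau$ is non-decreasing (in particular slowly decreasing) and $\mathcal{L}\{\tau;s\}$ converges for $\Re e\, s>0$, while an integration by parts together with (\ref{treq2}) gives
\[
\mathcal{L}\{\tau;s\}=-\frac{1}{s}\,\frac{\zeta'(s+1)}{\zeta(s+1)}.
\]
By Theorem~\ref{thtauberian1} it then suffices to show, for the constant $c$ in the statement, that $\mathcal{L}\{\tau;s\}-s^{-2}-cs^{-1}$ has local pseudofunction boundary behavior on the whole line $\Re e\, s=0$.

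Off the point $s=0$ this is forced by the PNT alone: the PNT is equivalent (Theorem~\ref{thtauberian2} applied to $S(u)=\psi(e^{u})$) to $-\zeta'(w)/\zeta(w)-(w-1)^{-1}$ having local pseudofunction boundary behavior on $\Re e\, w=1$, and, after the shift $w=s+1$ and multiplication by $1/s$ — which is analytic in a neighbourhood of each $it$ with $t\neq0$ — this yields that $\mathcal{L}\{\tau;s\}-s^{-2}$, hence also $\mathcal{L}\{\tau;s\}-s^{-2}-cs^{-1}$, is a local pseudofunction near each such point. So everything is localized to a neighbourhood of $s=0$.

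There I would use (\ref{approximationN1}). The zeta relation and (\ref{approximationN1}) give $s\,\zeta(s+1)=(s+1)\bigl(a+s\,\mathcal{L}\{E;s\}\bigr)$ on $\Re e\, s>0$, and with $\Psi(s):=\log\bigl(1+\tfrac{s}{a}\mathcal{L}\{E;s\}\bigr)$ a short computation yields
\[
\mathcal{L}\{\tau;s\}-\frac{1}{s^{2}}-\frac{c}{s}=\frac{1}{s+1}-\frac{\Psi'(s)-\Psi'(0)}{s},\qquad c:=-1-\Psi'(0),
\]
together with $\Psi'(0)=a^{-1}\mathcal{L}\{E;0\}$, where $\mathcal{L}\{E;0\}$ denotes the value at $s=0$ of the boundary distribution of $\mathcal{L}\{E;\cdot\}$. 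As $(s+1)^{-1}$ is analytic near $s=0$, it remains to show that $\bigl(\Psi'(s)-\Psi'(0)\bigr)/s$ has local pseudofunction boundary behavior near $s=0$ and that $\mathcal{L}\{E;0\}=b$. Here the hypotheses on $E$ enter through the kernels. Since $E\ast K_{2}\in L^{1}(\mathbb{R})$ by (\ref{thmainas3}), we have $\hat{E}\,\widehat{K_{2}}=\widehat{E\ast K_{2}}\in A(\mathbb{R})$; as $\widehat{K_{2}}\in A_{loc}$ and $\widehat{K_{2}}(0)\neq0$, the Wiener division theorem — in the non-quasianalytic Beurling algebra singled out by the moment condition on $K_{2}$ — gives $1/\widehat{K_{2}}\in A_{loc}$ near $0$, so $\hat{E}=\widehat{E\ast K_{2}}/\widehat{K_{2}}$ shows that $\mathcal{L}\{E;\cdot\}$ has $A_{loc}$-boundary behavior near $s=0$; in particular $\mathcal{L}\{E;0\}=\hat{E}(0)=(\widehat{K_{2}}(0))^{-1}\int_{-\infty}^{\infty}(E\ast K_{2})(y)\,\mathrm{d}y=b$, whence $c=-1-b/a$ as claimed. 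Since $(E\ast K_{1})(y)=o(1/y)$ by (\ref{thmainas1}), the function $y\,(E\ast K_{1})(y)$ tends to $0$ — its decay at $-\infty$ following from $\operatorname{supp}E\subseteq[0,\infty)$ and the moment condition on $K_{1}$, which also makes the convolutions $E\ast K_{j}$ meaningful — so its Fourier transform, a non-zero constant multiple of $(\hat{E}\,\widehat{K_{1}})'$, lies in $PF(\mathbb{R})$. Writing $(\hat{E}\,\widehat{K_{1}})'=\hat{E}'\,\widehat{K_{1}}+\hat{E}\,\widehat{K_{1}}'$ near $0$, using $\widehat{K_{1}}'=\mathcal{F}(-iyK_{1})\in A(\mathbb{R})$ and $\hat{E}\in A_{loc}$ near $0$ (so the second summand lies in $A_{loc}\subseteq PF_{loc}$), and then dividing by $\widehat{K_{1}}$ (again $1/\widehat{K_{1}}\in A_{loc}$ near $0$), we obtain that $\hat{E}'$ — up to a non-zero constant, the boundary value of $\tfrac{\mathrm{d}}{\mathrm{d}s}\mathcal{L}\{E;\cdot\}$ — is a local pseudofunction near $0$.

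It remains to assemble these facts, using that $PF_{loc}$ is a module over $A_{loc}$ together with the product rule for boundary values of Laplace transforms in Lemma~\ref{productlpfbb}. The denominator $1+\tfrac{s}{a}\mathcal{L}\{E;s\}$ has $A_{loc}$-boundary behavior with non-vanishing value $1$ at $s=0$, so by the Wiener division theorem does its reciprocal; the numerator $\tfrac{1}{a}\mathcal{L}\{E;s\}+\tfrac{s}{a}\tfrac{\mathrm{d}}{\mathrm{d}s}\mathcal{L}\{E;s\}$ of $\Psi'$ has local pseudofunction boundary behavior near $s=0$ by what precedes; hence $\Psi'$ has, and, separating off the part of the numerator that vanishes at $s=0$, the question of $\bigl(\Psi'(s)-\Psi'(0)\bigr)/s$ is reduced to showing that
\[
\frac{\mathcal{L}\{E;s\}-\mathcal{L}\{E;0\}}{s}=\mathcal{L}\bigl\{E^{(-1)}-b\,\mathbf{1}_{[0,\infty)};s\bigr\},\qquad E^{(-1)}(u):=\int_{0}^{u}E(v)\,\mathrm{d}v,
\]
has local pseudofunction boundary behavior near $s=0$. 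I expect this to be the main obstacle: division of a Laplace transform by $s$ corresponds to antidifferentiation and in general does not preserve local pseudofunction behavior, so the argument must use exactly the extra $A_{loc}$-regularity of $\mathcal{L}\{E;\cdot\}$ supplied by (\ref{thmainas3}) in tandem with the pseudofunction bound on $\tfrac{\mathrm{d}}{\mathrm{d}s}\mathcal{L}\{E;\cdot\}$ supplied by (\ref{thmainas1}); this is the analytic shadow of the fact, recorded in Proposition~\ref{prop1pnteex}, that neither condition alone suffices. Granting it, $\mathcal{L}\{\tau;s\}-s^{-2}-cs^{-1}$ is a local pseudofunction on all of $\Re e\, s=0$, so Theorem~\ref{thtauberian1} gives $\psi_{1}(e^{u})=u+c+o(1)$, that is, (\ref{sharpMertens}) holds with $c=-1-b/a$ and $b=(\widehat{K_{2}}(0))^{-1}\int_{-\infty}^{\infty}(E\ast K_{2})(y)\,\mathrm{d}y$.
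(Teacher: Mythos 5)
Your reformulation in terms of $\Psi(s)=\log\bigl(1+\tfrac{s}{a}\mathcal{L}\{E;s\}\bigr)$ is correct and is an algebraic repackaging of exactly the decomposition the paper uses: your identity $\mathcal{L}\{\tau;s\}-s^{-2}-cs^{-1}=\tfrac{1}{s+1}-\bigl(\Psi'(s)-\Psi'(0)\bigr)/s$ corresponds, after clearing the factor $1/a$ and unwinding $\Psi$, to the paper's subtraction of (\ref{preq2}) from (\ref{preq1}), and your treatment of $\hat E'$ via the $o(1/h)$ estimate matches the paper's argument for (\ref{preq2}). So the route is essentially the same; the issue is that you openly leave a gap and, more importantly, misdiagnose how to fill it.

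The gap is real: you reduce the matter to showing that
\[
\frac{\mathcal{L}\{E;s\}-b}{s}=\mathcal{L}\bigl\{E^{(-1)}-b\,\mathbf{1}_{[0,\infty)};s\bigr\},\qquad E^{(-1)}(u)=\int_{0}^{u}E(v)\,\mathrm{d}v,
\]
has local pseudofunction boundary behavior near $s=0$, and stop there with ``granting it.'' This is precisely the content of Lemma~\ref{lemintconv}, and the idea that closes it is elementary but you are missing it: since $E\ast K_{2}\in L^{1}(\mathbb{R})$, the function $(E^{(-1)}\ast K_{2})(x)=\int_{-\infty}^{x}(E\ast K_{2})(y)\,\mathrm{d}y$ converges (even absolutely) to $b\,\hat{K}_{2}(0)$ as $x\to\infty$, i.e.\ $\bigl((E^{(-1)}-b\,\mathbf{1}_{[0,\infty)})\ast K_{2}\bigr)(x)=o(1)$. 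Because $E^{(-1)}$ is only $O(|y|)$ rather than bounded, one then applies the Wiener division theorem in the Beurling algebra $L^{1}_{\omega}$ with weight $\omega(y)=1+|y|$ — this is why the moment condition on $K_{2}$ matters — to conclude that the Fourier transform of $E^{(-1)}-b\,\mathbf{1}_{[0,\infty)}$ is a pseudofunction in a neighbourhood of the origin, which is exactly the desired boundary behavior. In particular this also shows your diagnosis is off: you conjecture that filling this gap requires combining the $A_{loc}$-regularity from (\ref{thmainas3}) ``in tandem with'' the pseudofunction bound on $\tfrac{\mathrm{d}}{\mathrm{d}s}\mathcal{L}\{E;\cdot\}$ from (\ref{thmainas1}), but the argument just sketched uses only (\ref{thmainas3}) (together with the a priori bound $E\in L^{\infty}$ coming from Proposition~\ref{pnteprop1}); (\ref{thmainas1}) is needed only for the separate piece you already handled, namely that $\hat E'$ is a local pseudofunction near $0$.

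One further point you should not leave implicit: when you multiply $PF_{loc}$ and $A_{loc}$ boundary values and identify the product with the boundary value of the product of Laplace transforms, Lemma~\ref{productlpfbb} requires one of the two time-domain factors to lie in $\mathcal{B}'(\mathbb{R})$ or $\mathcal{D}'_{L^1}(\mathbb{R})$. The paper arranges its factorization (\ref{preq4}) so that the $PF_{loc}$ factor $-\zeta'(s)/\zeta(s)-1/(s-1)$ is the Laplace transform of $e^{-u}\,\mathrm{d}\psi(e^{u})-\chi_{[0,\infty)}(u)\,\mathrm{d}u$, which is a bounded distribution by the PNT. In your $\Psi'=N(s)/D(s)$ factorization the $PF_{loc}$ factor $N(s)$ is the Laplace transform of $-\tfrac{1}{a}(uE(u))'$, which is not visibly a bounded distribution, so the hypotheses of Lemma~\ref{productlpfbb} would need to be checked more carefully (or the factorization rearranged as in the paper).
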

 
Implicitly in Theorem \ref{thmain} we need the existence of the convolutions $E\ast K_{j}$. This is always ensured by the PNT, as follows from the next simple proposition which delivers the bound $E(y)\ll_{\varepsilon}y^{1+\varepsilon}$ for every $\varepsilon>0$.
\begin{proposition}
\label{propboundpnt} The PNT implies the bound $N(x)\ll_{\varepsilon} x\log^{1+\varepsilon}x$, for each $\varepsilon>0$. 
\end{proposition}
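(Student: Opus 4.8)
We must show that the PNT for a Beurling system implies $N(x)\ll_{\varepsilon} x\log^{1+\varepsilon}x$ for every $\varepsilon>0$.

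The plan is to exploit the multiplicative-convolution identity $\mathrm{d}N=\exp^{\ast_M}(\mathrm{d}\Pi)$, or equivalently, to pass to the additive side by the substitution $x=e^u$ and write everything in terms of $\psi$ and $N$ as functions on $[0,\infty)$. Concretely, since $\mathrm{d}N = \exp^{\ast_M}(\mathrm{d}\Pi)$ and $\log t\,\mathrm{d}\Pi(t)=\mathrm{d}\psi(t)$, one has the classical identity $\log x\cdot\mathrm{d}N(x) = (\mathrm{d}N \ast_M \mathrm{d}\psi)(x)$, i.e.
\[
N(x)\log x = \int_{1^-}^{x} N(x/t)\,\mathrm{d}\psi(t) = \int_{1^-}^{x} \psi(x/t)\,\mathrm{d}N(t).
\]
Writing $\widetilde N(u) = N(e^u)$ and $\widetilde\psi(u)=\psi(e^u)$, this becomes an additive convolution: $u\,\widetilde N(u)$ (up to the harmless difference between $u\widetilde N(u)$ and the integral, which only involves the jump structure) equals $(\mathrm{d}\widetilde N \ast \widetilde\psi)(u)$. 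The PNT says $\psi(x)\sim x$, i.e. $\widetilde\psi(u) = e^u(1+o(1))$, and in particular $\psi(x)\leq Cx$ for all $x\ge 1$.

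First I would establish the bound $N(x)\log x \le \int_{1}^{x}\psi(x/t)\,\mathrm{d}N(t) + \psi(x) \le C\int_{1^-}^{x}(x/t)\,\mathrm{d}N(t) + Cx = Cx\left(N(1) + \int_1^x t^{-1}\mathrm{d}N(t)\right) + Cx$, and then bound $\int_1^x t^{-1}\mathrm{d}N(t)$ by integration by parts: it equals $N(x)/x + \int_1^x N(t)t^{-2}\mathrm{d}t$. Thus, setting $R(x) = N(x)/x$, we arrive at an integral inequality of Gronwall type,
\[
R(x)\log x \;\le\; C' + C\int_1^x R(t)\,\frac{\mathrm{d}t}{t}.
\]
Substituting $v=\log t$ and $g(v) = R(e^v) = \widetilde N(v)e^{-v}$ turns this into $v\,g(v)\le C' + C\int_0^v g(w)\,\mathrm{d}w$, and an elementary iteration (or direct Gronwall argument on $G(v)=\int_0^v g$) gives $g(v)\ll v^{C-1}$ — but that is a polynomial bound, too weak. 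The point is that near infinity the constant $C$ can be taken arbitrarily close to $1$, since $\psi(x)/x\to 1$: for any $\eta>0$ there is $x_0$ with $\psi(x)\le(1+\eta)x$ for $x\ge x_0$. Splitting the convolution integral at $x/t = x_0$ (small $t$) and at $t\le x_0$ (large $t$), the "large $t$" part contributes only $O(x)$ (finitely much mass of $\mathrm{d}N$ on $[1,x_0]$), while the main part is governed by the constant $(1+\eta)$. This refines the differential inequality to $v\,g(v)\le C_\eta + (1+\eta)\int_0^v g(w)\,\mathrm{d}w$ for large $v$, whence $G(v)=\int_0^v g \ll v^{1+\eta}$, i.e. $\int_1^x N(t)t^{-2}\mathrm{d}t \ll \log^{1+\eta}x$, and feeding this back gives $N(x)\log x \ll x\log^{1+\eta}x$, that is, $N(x)\ll x\log^{\eta}x$ — in fact slightly better than what is claimed, so a fortiori $N(x)\ll_\varepsilon x\log^{1+\varepsilon}x$.

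The main obstacle is handling the Gronwall step with the sharp constant: one must be careful that the constant multiplying the integral term can genuinely be pushed below any prescribed $1+\varepsilon$, which requires splitting off the bounded initial range of the $\mathrm{d}N$-convolution and tracking that the tail of $\mathrm{d}\psi$ contributes with constant arbitrarily close to $1$. A minor technical point is the relation between $N(x)\log x$ and the convolution integral $\int_{1^-}^x\psi(x/t)\,\mathrm{d}N(t)$ — these differ because of the atom of $\mathrm{d}N$ at $x$ and the value $\psi(1)=0$, but the discrepancy is $O(\psi(x)) = O(x)$ and thus negligible. Once the integral inequality $R(x)\log x \le C_\eta + (1+\eta)\int_1^x R(t)\,t^{-1}\mathrm{d}t$ is in hand for $x$ large, the conclusion is immediate: by Gronwall's inequality applied in the variable $\log x$, $\int_1^x R(t)t^{-1}\mathrm{d}t \ll (\log x)^{1+\eta}$, hence $R(x)\ll (\log x)^{\eta}$ and $N(x)\ll_\eta x(\log x)^{\eta}$, which gives the proposition with room to spare.
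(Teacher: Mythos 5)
The paper's own proof is short and completely different in nature: from the PNT it deduces the Abelian estimate $\log\zeta(\sigma)\sim-\log(\sigma-1)$ as $\sigma\to1^{+}$, hence $\zeta(\sigma)\ll(\sigma-1)^{-1-\varepsilon}$, and then applies a Rankin-type trick,
\[
N(x)\le x\int_{1^{-}}^{x}u^{-1}\exp\Bigl(1-\tfrac{\log u}{\log x}\Bigr)\,\mathrm{d}N(u)\le e\,x\,\zeta\bigl(1+1/\log x\bigr)\ll x\log^{1+\varepsilon}x .
\]
Your plan---passing to the real-variable identity $\log u\,\mathrm{d}N=\mathrm{d}N\ast_{M}\mathrm{d}\psi$ and running a Gronwall argument with a constant pushed close to $1$---is genuinely different and, if completed, would in fact give the slightly stronger bound $N(x)\ll_{\varepsilon}x\log^{\varepsilon}x$, whereas the Rankin step above always loses one power of $\log$ since $\zeta(1+1/\log x)\gg\log x$ whenever $N(x)\gg x$.

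However, as written your argument has a concrete gap in the first step. Integrating the convolution identity gives
\[
\int_{1^{-}}^{x}\log u\,\mathrm{d}N(u)=\int_{1^{-}}^{x}\psi(x/t)\,\mathrm{d}N(t),
\]
and integration by parts shows
\[
N(x)\log x=\int_{1^{-}}^{x}\psi(x/t)\,\mathrm{d}N(t)+\int_{1}^{x}\frac{N(u)}{u}\,\mathrm{d}u .
\]
Your assertion that the discrepancy between $N(x)\log x$ and $\int_{1^{-}}^{x}\psi(x/t)\,\mathrm{d}N(t)$ is ``only the jump structure'' and is $O(\psi(x))=O(x)$ is false: the extra term $\int_{1}^{x}N(u)u^{-1}\,\mathrm{d}u$ is of the same order as $N(x)$ (for example it is $\sim ax$ already when $N(x)\sim ax$, and it is $\gg x$ the moment $N(x)\gg x$), so it cannot be discarded without risking circularity. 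If you feed it into the Gronwall step only via the crude bound $\int_{1}^{x}N(u)u^{-1}\,\mathrm{d}u\le x\int_{1}^{x}N(u)u^{-2}\,\mathrm{d}u$, the constant in front of the Gronwall integral becomes $2+\eta$ rather than $1+\eta$, which still gives the stated $N(x)\ll_{\varepsilon}x\log^{1+\varepsilon}x$ but not the extra saving you claim. The fix that actually recovers constant $1+\eta$ is to split this term at $u=x/y_{0}$ as well: on $[1,x/y_{0}]$ one has $u^{-1}\le (x/y_{0})\,u^{-2}$, contributing at most $(x/y_{0})\int_{1}^{x}N(u)u^{-2}\,\mathrm{d}u$, while on $[x/y_{0},x]$ the contribution is $\le N(x)\log y_{0}=O_{\eta}(N(x))$, which is absorbed once $\log x$ is large. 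With this correction the Gronwall constant is $(1+\eta)+1/y_{0}$, arbitrarily close to $1$, and your conclusion does follow. So the method is salvageable, but the step you labelled ``a minor technical point'' is where the real work lies, and as stated it is simply wrong.
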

\begin{proof} By the PNT we have $\log \zeta(\sigma)\sim -\log(\sigma-1)$, and so $\zeta(\sigma)\ll (\sigma-1)^{-1-\varepsilon}$ as $\sigma\to1^{+}$. Using that $N$ is non-decreasing, 
\begin{align*}
N(x)&\leq x\int_{1^{-}}^{x}u^{-1}\mathrm{d}N(u)\leq x\int_{1^{-}}^{x}u^{-1}\exp\left(1-\frac{\log u}{\log x}\right)\mathrm{d}N(u)
\\
&
\leq e x\zeta\left(1+1/\log x\right)\ll x\log^{1+\varepsilon}x. 
\end{align*}
\end{proof}

It is very easy to verify that Theorem \ref{thmain} contains Theorem \ref{thmainsimplified}. In fact, (\ref{thmainas3simplified}) is the same as $E\in L^{1}(\mathbb{R})$, which always yields (\ref{thmainas3}) for any kernel $K_{2}\in L^{1}(\mathbb{R})$. Furthermore, (\ref{thmainas1simplified}) implies  (\ref{thmainas1}) for any kernel such that $\int_{-\infty}^{\infty}|K_{1}(y)|(1+|y|)\mathrm{d}y<\infty$.

Note that we do not necessarily require in Theorem \ref{thmain} that $N$ has a positive asymptotic density, that is, that
\begin{equation}
\label{densityeq}
N(x)\sim ax.
\end{equation}
 In fact, this condition plays basically no role for our arguments. On the other hand, either (\ref{thmainas1simplified}) or  (\ref{thmainas3simplified}) automatically implies (\ref{densityeq}), as one may deduce from elementary  arguments in the second case. In the general case, (\ref{densityeq}) also follows from (\ref{thmainas1simplified}) or  (\ref{thmainas3simplified}) if one of the $K_{j}$ is a Wiener kernel. The next proposition collects this assertion as well as the useful bound $N(x)\ll x$, which is actually crucial for our proof of Theorem \ref{thmain} and turns out to be implied by its assumptions.

\begin{proposition}\label{pnteprop1} Let $N$ satisfy $N(x)\ll x\log^{\alpha}x$ and let $K$ be such that $\hat{K}(0)\neq0$ and $\int_{-\infty}^{\infty} (1+|y|)^{\alpha}|K(y)|\mathrm{d}y<\infty$, where $\alpha\geq0$. If $E\ast K\in L^{\infty}(\mathbb{R})$ or $E\ast K\in L^{1}(\mathbb{R})$, then $N(x)\ll x$ (and hence $E\in L^{\infty}(\mathbb{R})$). If additionally $K$ is a Beurling-Wiener kernel, that is, $\hat{K}(t)\neq 0$ for all $t\in \mathbb{R}$, and $(E\ast K)(y)=o(1)$ or $E\ast K\in L^{1}(\mathbb{R})$, then (\ref{densityeq}) holds. (Here $a=0$ is allowed).
\end{proposition}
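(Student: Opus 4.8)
The plan is to work with the Mellin-Laplace transform of $N$ and exploit the fact that, after the substitution $x = e^{y}$, the relation $N(x) = ax + xE(\log x)$ translates the convolution hypotheses into statements about the boundary behavior of $\mathcal{L}\{dN; s\}$ near $s = 1$. Concretely, set $S(y) = e^{-y}N(e^{y})$ (with support in $[0,\infty)$ after the obvious normalization), so that $dN$ corresponds to a measure whose Laplace transform is $\zeta(s)$; the hypothesis $N(x) \ll x\log^{\alpha}x$ says $S(y) \ll (1+y)^{\alpha}$, i.e.\ $\zeta(s)$ has at worst a pole-type singularity of order $\alpha+1$ at $s=1$. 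The key observation is that $E = S - a$ (as a function of $y$, extended by $0$ for $y<0$ up to the constant term), and the condition $\int(1+|y|)^{\alpha}|K(y)|\,\mathrm{d}y < \infty$ guarantees that $\hat{K}$ extends to a function that tames this singularity: the Laplace transform of $E \ast K$, which is essentially $(\zeta(s) - a/(s-1))\hat{K}(-i(s-1))$ up to harmless analytic factors, is then bounded (if $E \ast K \in L^{\infty}$) or even continuous and vanishing at infinity (if $E \ast K \in L^{1}$) on the line $\Re e\, s = 1$ near $s = 1$.

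The first main step is to convert $E \ast K \in L^{\infty}(\mathbb{R})$ (resp.\ $L^{1}(\mathbb{R})$) into local pseudomeasure (resp.\ local pseudofunction) boundary behavior of $\zeta(s)$ on a segment of $\Re e\, s = 1$ around $s = 1$. Since $\hat{K}(0) \neq 0$, we may divide by $\hat{K}$ in a neighborhood of the origin — here I would invoke the Wiener division / Beurling-algebra machinery mentioned in the Tauberian-tools section, using that $K$ has enough decay, $\int(1+|y|)^{\alpha+1+\varepsilon}|K(y)|\,\mathrm{d}y < \infty$ actually (as assumed in Theorem \ref{thmain}; for the bare $\alpha$-moment version one argues locally where $\hat K\neq 0$ suffices) — to recover that $\zeta(s) - a/(s-1)$ itself has local pseudomeasure boundary behavior near $s=1$. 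Then Lemma \ref{thtauberian3} applied to $S$ (which is non-decreasing up to the $e^{-y}$ weight; more precisely one applies it to $N(e^{y})$, whose Laplace-Stieltjes transform is $\zeta(s)$ shifted, using that $N$ is non-decreasing) yields $N(e^{y}) \ll e^{y}$, i.e.\ $N(x) \ll x$, and hence $E \in L^{\infty}(\mathbb{R})$.

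For the second assertion, once we know $N(x) \ll x$ we have $\alpha = 0$ effectively, so $\zeta(s) - a/(s-1)$ has local pseudomeasure boundary behavior, and under the stronger Beurling-Wiener hypothesis $\hat{K}(t) \neq 0$ for \emph{all} $t$, the division by $\hat{K}$ can be carried out on an arbitrarily long segment, upgrading the information to local pseudofunction boundary behavior of $\zeta(s) - a/(s-1)$ on all of $\Re e\, s = 1$ once we also use $E \ast K \in L^{1}$ or $(E\ast K)(y) = o(1)$ (the latter giving a local pseudofunction via the generalized Riemann-Lebesgue lemma \eqref{GRLeq}). Applying Theorem \ref{thtauberian2} to $S(y) = N(e^{y})$ then gives $N(e^{y}) \sim a e^{y}$, which is exactly \eqref{densityeq}; the case $a = 0$ is covered automatically since Theorem \ref{thtauberian2} allows a vanishing leading coefficient. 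I expect the main obstacle to be the careful bookkeeping in the division step: one must track that multiplying by the analytic nonvanishing factors coming from the change of variables $x = e^{y}$ (there is a Jacobian-type factor $1/s$ and a shift $s \mapsto s-1$) does not destroy the pseudomeasure/pseudofunction property, and that the local nature of the division (only near $s=1$ in the first part, globally in the second) is precisely what distinguishes the two conclusions — this is where Lemma \ref{productlpfbb} and the $A_{loc}$-multiplier discussion do the real work.
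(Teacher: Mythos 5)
Your proposal follows essentially the same route as the paper's proof: you invoke the Wiener division theorem for the weighted Beurling algebra $L^{1}_{\omega}(\mathbb{R})$ with $\omega(y)=(1+|y|)^{\alpha}$ (the bound $E(y)=O((1+|y|)^{\alpha})$ coming from $N(x)\ll x\log^{\alpha}x$) to obtain local pseudomeasure, respectively pseudofunction, boundary behavior of $\zeta(s)-a/(s-1)$ near $s=1$, and then apply Lemma \ref{thtauberian3} for the $O(x)$ bound and Theorem \ref{thtauberian2} for (\ref{densityeq}), exactly as the paper does. One small point: your parenthetical worry about $\int(1+|y|)^{1+\varepsilon}|K|\,\mathrm{d}y$ imports a hypothesis from Theorem \ref{thmain} that is irrelevant here (the $\alpha$-moment condition is precisely what the weighted division theorem needs), and the paper disposes of the $E\ast K\in L^{1}(\mathbb{R})$ case in one stroke by replacing $K$ with $K\ast\varphi$ for a Schwartz function $\varphi$ with nowhere-vanishing $\hat{\varphi}$, turning it into $(E\ast K\ast\varphi)(y)=o(1)$ while preserving the zero set of the Fourier transform.
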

\begin{proof} The bound on $N$ ensures that $E(y)=O((|y|+1)^{\alpha})$. Convolving $K$ with a test function $\varphi\in \mathcal{S}(\mathbb{R})$ yields $(E\ast K\ast \varphi)(y)=o(1) $ if $E\ast K\in L^{1}(\mathbb{R})$. Thus, replacing $K$ by $K\ast\varphi$ with $\hat{\varphi}(t)\neq0$ for all $t\in \mathbb{R}$ if necessary, we may just deal with the cases $E\ast K\in L^{\infty}(\mathbb{R})$ and $(E\ast K)(y)=o(1)$. Assume $E\ast K\in L^{\infty}(\mathbb{R})$. Applying the analogue of the Wiener division theorem \cite[Thm. II.7.3, p. 81]{korevaarbook} for the weighted Beurling algebra \cite{beurling1938,korevaarbook} $L^{1}_{\omega}(\mathbb{R})$ with non-quasianalytic weight function $\omega(y)=(1+|y|)^{\alpha}$, we obtain that $\hat{E}\in PM_{loc}(I)$ for some open interval $0\in I$ on which $\hat{K}(t)$ does not vanish. The Laplace transform of $E$, $\zeta(s+1)/(s+1)-a/s$, thus has local pseudomeasure boundary behavior on $iI$. Multiplying by the smooth function $(s+1)$ preserves the local pseudomeasure boundary behavior, and substituting $s+1$ by $s$ gives that $\zeta(s)-a/(s-1)$ has local pseudomeasure boundary behavior on $1+iI$, and so does $\zeta(s)$ because $a/(s-1)$ is actually a global pseudomeasure on $\Re e\:s=1$. Hence, $S(x)=N(e^{x})\ll e^{x}$ is a consequence of Lemma \ref{thtauberian3}. If we additionally know that $\hat{K}(t)$ never vanishes and $(E\ast K)(y)=o(1)$, the division theorem yields $\hat{E}\in PF_{loc}(\mathbb{R})$, or equivalently, that $\zeta(s)-a/(s-1)$ has local pseudofunction boundary behavior on the whole line $\Re e\:s=1$. The conclusion $N(x)\sim ax$ now follows from Theorem \ref{thtauberian2} applied to $S(x)=N(e^{x})$.
\end{proof}

Examples of kernels that can be used in Theorem \ref{thmain}, and for which one can apply Proposition \ref{pnteprop1} to deduce  (\ref{densityeq}) as a consequence of either  (\ref{thmainas1}) or (\ref{thmainas3}), are familiar summability kernels such as the Ces\`{a}ro-Riesz kernels $K(y)=e^{-y}(1-e^{-y})_{+}^{\beta}$ with $\beta\geq0$, the kernel of Abel summability $K(y)=e^{-y}e^{-e^{-y}}$, and the Lambert summability kernel $K(y)=e^{-y}p(e^{-y})$ with $p(u)=(u/(1-e^{u}))'$; see \cite{korevaarbook} and \cite[Sect.~1.5, p.~15]{ganeliusbook} for many other possible examples. Also, note that if we write $k_{1}(x)=x^{-1}K_{1}(-\log x)$, then (\ref{thmainas1}) takes the form
$$
\int_{1}^{\infty}\frac{N(u)}{u} k_{1}\left(\frac{u}{x}\right)\mathrm{d}u=ax\int_{0}^{\infty}k_{1}(u)\mathrm{d}u+o\left(\frac{x}{\log x}\right), \quad x\to\infty.
$$
In the previous examples we have $k_{1}(u)=(1-u)_{+}^{\beta}$, $k_{1}(u)=e^{-u}$, and $k_1(u)=p(u)$.

We now concentrate in showing Theorem \ref{thmain}. Our proof is based on the application of the Tauberian theorems from the previous section and two lemmas. We begin by translating the PNT and the sharp Mertens relation into boundary properties of the zeta function. 
\begin{lemma}
\label{lemmatranslationzeta}
A generalized number system satisfies the PNT if and only if
\begin{equation}
\label{treq3}
-\frac{\zeta'(s)}{\zeta(s)} - \frac{1}{s-1} 
\end{equation}   
admits local pseudofunction boundary behavior on $\Re e \: s = 1$;
the sharp Mertens relation (\ref{sharpMertens}) holds if and only if 
 \begin{equation}
\label{treq5}
-\frac{\zeta'(s)}{(s-1)\zeta(s) } - \frac{1}{(s-1)^{2}} -\frac{c}{s-1}
\end{equation}
has local pseudofunction boundary behavior on $\Re e \: s = 1$.
\end{lemma}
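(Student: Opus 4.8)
The plan is to obtain both equivalences as direct translations through the Tauberian theorems of the previous section, after passing to the exponential variable. For the first equivalence I would set $S(x):=\psi(e^{x})$, which is non-decreasing with support in $[0,\infty)$; the substitution $x=e^{u}$ in (\ref{treq2}) gives $\mathcal{L}\{\mathrm{d}S;s\}=\int_{1}^{\infty}x^{-s}\,\mathrm{d}\psi(x)=-\zeta'(s)/\zeta(s)$ on its half-plane of convergence. Since the PNT is precisely $S(x)\sim e^{x}$, Theorem~\ref{thtauberian2} with $a=1$ yields the claimed equivalence with (\ref{treq3}): the implication PNT $\Rightarrow$ (\ref{treq3}) is the Abelian (``only if'') half of that theorem, while for the converse one feeds the ``if'' half, whose hypothesis of convergence of $\int_{1}^{\infty}x^{-s}\,\mathrm{d}\psi(x)$ on $\Re e\:s>1$ is supplied by the observation that local pseudofunction boundary behavior of (\ref{treq3}) already forces, by the very definition adopted in the previous section, analyticity of $-\zeta'(s)/\zeta(s)$ on $\Re e\:s>1$, so that Landau's theorem on the abscissa of convergence of a Laplace--Stieltjes transform with non-negative density ($\mathrm{d}\psi\geq 0$) does the rest.

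For the second equivalence I would set $\tau(x):=\psi_{1}(e^{x})$ for $x\geq 0$ and $\tau(x)=0$ for $x<0$; this is non-decreasing (hence slowly decreasing) and locally integrable with support in $[0,\infty)$. Using $\mathrm{d}\psi_{1}(x)=x^{-1}\,\mathrm{d}\psi(x)$ together with (\ref{treq2}) and $x=e^{u}$ we get $\mathcal{L}\{\mathrm{d}\tau;s\}=\int_{1}^{\infty}x^{-(s+1)}\,\mathrm{d}\psi(x)=-\zeta'(s+1)/\zeta(s+1)$, and an elementary integration by parts yields $\mathcal{L}\{\tau;s\}=s^{-1}\mathcal{L}\{\mathrm{d}\tau;s\}=-\zeta'(s+1)/\bigl(s\,\zeta(s+1)\bigr)$. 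The sharp Mertens relation (\ref{sharpMertens}) is exactly $\tau(x)=x+c+o(1)$, so Theorem~\ref{thtauberian1} with $a=1$ and $b=c$ says this is equivalent to: $\mathcal{L}\{\tau;s\}$ converges for $\Re e\:s>0$ and $\mathcal{L}\{\tau;s\}-s^{-2}-c\,s^{-1}$ has local pseudofunction boundary behavior on $\Re e\:s=0$. Performing the horizontal shift $s\mapsto s-1$, which leaves the limiting boundary distribution unchanged and hence preserves local pseudofunction boundary behavior, turns the last statement into the assertion that (\ref{treq5}) has local pseudofunction boundary behavior on $\Re e\:s=1$. The convergence hypothesis needed in the converse direction is obtained exactly as in the first part, from analyticity of $-\zeta'(s)/\zeta(s)$ on $\Re e\:s>1$ and Landau's theorem.

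The argument is thus essentially a dictionary translation and presents no deep difficulty; the step I expect to require the most care is precisely this convergence bookkeeping in the converse directions, namely extracting convergence of the relevant Laplace (respectively Laplace--Stieltjes) transforms on the whole half-planes $\Re e\:s>0$ (resp.\ $\Re e\:s>1$) from the mere boundary regularity that is assumed, for which the positivity of $\mathrm{d}\psi$ and Landau's theorem are the essential inputs. One should also confirm that the transform identities above are valid on those full half-planes and not merely near the boundary lines, but this is routine.
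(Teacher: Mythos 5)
Your proposal is correct and follows essentially the same route as the paper's own proof: apply Theorem~\ref{thtauberian2} to $S(x)=\psi(e^{x})$ via the Mellin--Stieltjes identity (\ref{treq2}), and Theorem~\ref{thtauberian1} to $\tau(x)=\psi_{1}(e^{x})$ whose Laplace transform is $-\zeta'(s+1)/\bigl(s\,\zeta(s+1)\bigr)$, then shift $s\mapsto s-1$. The only difference is that you spell out the half-plane convergence needed in the converse directions (analyticity built into the definition of boundary behavior, positivity of $\mathrm{d}\psi$, Landau's theorem), a bookkeeping step the paper leaves implicit.
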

\begin{proof} 
The PNT is $S(x)=\psi(e^{x})\sim e^{x}$, which is equivalent to the local pseudofunction boundary behavior of (\ref{treq3}) by Theorem \ref{thtauberian2}  and (\ref{treq2}). Next, we apply Theorem \ref{thtauberian1} to the non-decreasing function $\psi_{1}(e^{y})$, which has Laplace transform $\mathcal{L}\{\psi_{1}(e^{y});s\} = -\zeta'(s+1)/(\zeta(s+1)s$; the sharp Mertens relation,
 \[
  \psi_{1}(e^{y}) =  \int^{e^{y}}_{1} \frac{\mathrm{d}\psi(u)}{u} = y + c + o(1),
\]
holds if and only if
\[
 -\frac{\zeta'(s+1)}{\zeta(s+1) s} - \frac{1}{s^{2}} - \frac{c}{s}
\]           
has local pseudofunction behavior on $\Re e \: s = 0$. 
\end{proof}

By multiplying (\ref{treq5}) by $(s-1)$, it follows that the sharp Mertens relation always implies the PNT. This should not be so surprising because the PNT can also be easily deduced from (\ref{sharpMertens}) via integration by parts, as was done in \cite[Prop. 2.1]{diamond-zhang}. The non-trivial problem is of course the converse implication of the conditional equivalence. Observe that $(s-1)^{-1}$ is smooth off $s=1$, so that $(it)^{-1}$ is a multiplier for $PF_{loc}(\mathbb{R}\setminus\{0\})$; consequently, the local pseudofunction boundary behavior of the two functions (\ref{treq3}) and (\ref{treq5}) becomes equivalent except at the boundary point $s=1$.

Summarizing, since we are assuming the PNT, our task reduces to show that (\ref{treq5}) has local pseudofunction boundary behavior on a boundary neighborhood of the point $s=1$. The next lemma allows us to extract some important boundary behavior information on the zeta function from the condition (\ref{thmainas3}). It also provides the alternative formula 
$$
c=-1-\frac{1}{a}\int^{\infty}_{1} \frac{N(x) - ax}{x^{2}} \mathrm{d}x= -\frac{1}{a}\lim_{x\to\infty} \left(\int_{1^{-}}^{x}t^{-1}\mathrm{d}N(t)-a \log x\right).
$$
for the constant in the sharp Mertens relation (\ref{sharpMertens}) if we additionally assume that $\hat{K}_{2}$ never vanishes in Theorem \ref{thmain}.

\begin{lemma} \label{lemintconv} Suppose that $N(x)\ll x\log x$ and $E\ast K \in L^{1}(\mathbb{R})$ where $\int_{-\infty}^{\infty}(1+|y|)K(y)\mathrm{d}y<\infty$ and $\hat{K}(0)\neq 0 $. (Here we may allow $a=0$.)  Then,
\begin{equation} \label{eqlemma2}
 \frac{1}{s-1} \left( \zeta(s) - \frac{a}{s-1}\right) - \frac{a+b}{s-1}
\end{equation}
has local pseudofunction behavior near $s =1$, where $b=(\hat{K}(0))^{-1} \int_{-\infty}^{\infty} (E \ast K)(y)\mathrm{d}y$.
 If in addition $\hat{K}(t)\neq 0$ for all $t$, then
the integral
\begin{equation} 
	\label{thmainas2} 
	 \int^{\infty}_{1} \frac{N(x) - ax}{x^{2}}\: \mathrm{d}x = \int^{\infty}_{0}E(y)\mathrm{d}y
\end{equation}
converges to $b$ and (\ref{eqlemma2}) has local pseudofunction boundary behavior on the whole line $\Re e \: s = 1$.
 
\end{lemma}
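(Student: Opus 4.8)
The plan is to recast the claim in terms of boundary properties of the Laplace transform of $E$ and then feed these into the Tauberian results of Section~2. The starting point is the identity
\[
F(w):=\mathcal{L}\{E;w\}=\frac{\zeta(w+1)}{w+1}-\frac{a}{w},
\]
obtained by integrating $\zeta(s)=\int_{1^-}^{\infty}x^{-s}\,\mathrm{d}N(x)$ by parts and inserting $N(x)=ax+xE(\log x)$. Writing $w=s-1$, a one-line computation gives that \eqref{eqlemma2} equals $F(w)+\tfrac{F(w)-b}{w}$, and integration by parts identifies $\tfrac{F(w)-b}{w}=\mathcal{L}\{g;w\}$, where $g(y):=\int_{0}^{y}E(t)\,\mathrm{d}t-b$ for $y\ge0$ and $g(y)=0$ for $y<0$. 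Since $N(x)\ll x\log x$ forces $E(y)=O(1+|y|)$, the convolutions $E\ast K$ and $E\ast K-bK$ are genuine functions, $g$ is continuous on $(0,\infty)$ of at most quadratic growth, and all the Laplace transforms above converge for $\Re e\: s>1$.

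Next I extract the boundary behavior of $F$ near $s=1$ from $E\ast K\in L^{1}(\mathbb{R})$. Exactly as in the proof of Proposition~\ref{pnteprop1}, I invoke the Wiener division theorem for the non-quasianalytic Beurling algebra $L^{1}_{\omega}(\mathbb{R})$ with $\omega(y)=1+|y|$, to which $K$ belongs with $\hat K(0)\neq0$: this places $1/\hat K$ in $A_{loc}(I)$ on some neighborhood $I\ni0$ on which $\hat K$ does not vanish (and in $A_{loc}(\mathbb{R})$ when $\hat K$ is nowhere zero). Combining this with $\widehat{E\ast K}=\hat E\hat K$ and the fact that $\widehat{E\ast K}$ is continuous (hence a local pseudofunction), the boundary value $\hat E$ of $F$ on $\Re e\: s=1$ coincides near $0$ with the quotient $\widehat{E\ast K}/\hat K$; in particular $F$ has local pseudofunction boundary behavior near $s=1$, $\hat E$ is continuous there, and $\hat E(0)=\widehat{E\ast K}(0)/\hat K(0)=b$.

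The heart of the matter is the local pseudofunction behavior of $\tfrac{F(w)-b}{w}=\mathcal{L}\{g;w\}$. Here I set $H(y):=\int_{-\infty}^{y}(E\ast K-bK)(t)\,\mathrm{d}t$. Since $E\ast K-bK\in L^{1}(\mathbb{R})$ and $\widehat{E\ast K-bK}(0)=\widehat{E\ast K}(0)-b\hat K(0)=0$, the function $H$ is bounded and vanishes at $\pm\infty$, so $\hat H\in PF(\mathbb{R})$. From $H'=E\ast K-bK$ we obtain $(\hat E-b)\hat K=\widehat{E\ast K-bK}=it\,\hat H$, whereas $g'=E-b\delta_{0}$ gives $it\,\hat g=\widehat{g'}=\hat E-b$; hence $it\,\hat g=it\,(\hat H/\hat K)$ on $I$, so $\hat g-\hat H/\hat K=c\,\delta_{0}$ on $I$ for some constant $c$, and a short argument exploiting $\operatorname{supp}g\subseteq[0,\infty)$ (testing the identity against $\hat v$, $v\in\mathcal{S}(\mathbb{R})$, and using that $g\ast v\to0$ at $-\infty$) shows $c=0$. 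As $1/\hat K\in A_{loc}(I)$ is a multiplier for $PF_{loc}(I)$, we conclude $\hat g=(1/\hat K)\hat H\in PF_{loc}(I)$. Together with the previous paragraph, $\eqref{eqlemma2}=F(w)+\tfrac{F(w)-b}{w}$ has local pseudofunction boundary behavior near $s=1$, and on the whole line $\Re e\: s=1$ when $\hat K$ never vanishes.

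Finally, suppose $\hat K$ is nowhere zero, so that $\mathcal{L}\{g;w\}$ has local pseudofunction boundary behavior on all of $\Re e\: w=0$. The function $g$ is supported in $[0,\infty)$ and slowly decreasing: for $a=0$ this is trivial because then $g$ is non-decreasing, while for $a>0$ the monotonicity of $N$ together with $E(y)\ge-a$ yields $\int_{y}^{y+h}E\ge-a(h-1+e^{-h})+E(y)(1-e^{-h})\ge-2a\eta$ for $0\le h\le\eta\le1$ and all $y\ge0$. Theorem~\ref{thtauberian1}, applied with both of its constants equal to $0$, then gives $g(y)=o(1)$, i.e. $\int_{0}^{\infty}E(y)\,\mathrm{d}y=b$, equivalently $\int_{1}^{\infty}(N(x)-ax)x^{-2}\,\mathrm{d}x=b$. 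I expect the main obstacle to be the third step, namely legitimizing the ``division by $it$'' in passing from $(\hat E-b)\hat K=it\,\hat H$ to $\hat g=\hat H/\hat K$, i.e. ruling out the spurious $\delta_{0}$-term; this is precisely where the full force of $E\ast K\in L^{1}$ (rather than a mere $o(1)$ bound) and the one-sidedness of $g$ enter. Everything else is bookkeeping with Theorem~\ref{thtauberian1} and the Beurling--Wiener division theorem.
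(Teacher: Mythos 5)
Your proof is correct, and the overall strategy coincides with the paper's: translate the assertion into local pseudofunction boundary behavior of Laplace transforms, invoke the Beurling--Wiener division theorem to extract that behavior from $E\ast K\in L^{1}$, and finish with Theorem~\ref{thtauberian1}. The decomposition $\eqref{eqlemma2}=F(w)+(F(w)-b)/w$ with $w=s-1$ is algebraically the same as the paper's manipulation (the paper multiplies $\eqref{eqlemma1}$ by $s+1$ and discards a constant $-b$), so that part is cosmetic.

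The one place where you genuinely diverge is in establishing the local pseudofunction behavior of $(F(w)-b)/w=\mathcal{L}\{g;w\}$. The paper notices that $g\ast K = (\tau-b\chi_{[0,\infty)})\ast K$ equals $\int_{-\infty}^{x}(E\ast K-bK)$, which tends to $0$ at $\pm\infty$, and applies the Beurling--Wiener division theorem \emph{once, directly to $g$}, getting $\hat g\in PF_{loc}(I)$ in one stroke. You instead apply the division theorem to $E$, then reconstruct $\hat g$ from $\hat H/\hat K$ by dividing the relation $(\hat E-b)\hat K = it\,\hat H$ by $it$, and you correctly notice (and rule out) the ambiguity by a $\delta_{0}$-term using the one-sidedness of $g$. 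This extra step is sound but is, in effect, a re-derivation of a conclusion the division theorem gives for free when applied to $g$ rather than to $E$; note that the two are equivalent precisely because $H = g\ast K$ in your notation. The paper also first invokes Proposition~\ref{pnteprop1} to upgrade $E(y)=O(1+|y|)$ to $E\in L^{\infty}$, which simplifies the bookkeeping for the weighted algebra $L^{1}_{\omega}$ with $\omega(y)=1+|y|$; you work directly with $E(y)=O(1+|y|)$, which is fine here since $g\ast K=H$ is defined concretely as an antiderivative of an $L^1$ function rather than via a convolution integral. Your explicit verification that $g$ is slowly decreasing (split into $a=0$ and $a>0$, using the monotonicity of $N$) fills in a detail the paper states without proof.

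One small caution in the exposition: the phrase ``$\hat E$ is continuous there'' should be read as ``$\hat E$ coincides with a continuous function on a neighborhood of $0$,'' which follows once the division $\hat E=\widehat{E\ast K}/\hat K$ on $I$ is legitimized (this is exactly the content of the Beurling--Wiener division argument, not a formal algebraic identity for distributions). Also, in the ruling-out of $c\,\delta_0$, one should let $h\to+\infty$ in $\langle\hat g-\hat H/\hat K,e^{-ith}\hat v\rangle$, so that the pairing with $\hat g$ corresponds to $(g\ast v)$ evaluated near $-\infty$, where the one-sided support of $g$ forces decay; you have the right idea but the direction is easy to misread.
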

\begin{proof}
Set $\tau(x) := \int^{x}_{0} E(y)\mathrm{d}y$. Proposition \ref{pnteprop1} yields $N(x)\ll x$, i.e., $E(y)=O(1)$.  
Now, for $x>0$, $(\tau\ast K)(x)=\int_{-\infty}^{x}(E\ast K)(y)\mathrm{d}y=b\hat{K}(0)+o(1)=b\cdot (\chi_{[0,\infty)}\ast K)(x)+o(1)$, because the latter integral is even absolutely convergent. By applying the division theorem for the Beurling algebra  \cite{beurling1938,korevaarbook} with weight function $(1+|y|)$, we obtain that the Fourier transform of $\tau(x)-b\chi_{[0,\infty)}(x)$ is a pseudofunction in a neighborhood of the origin. So,
\begin{equation} \label{eqlemma1}
 \mathcal{L}\{\tau;s\}-\frac{b}{s} = \frac{1}{s} \left( \frac{\zeta(s+1)}{s+1} - \frac{a}{s}\right) - \frac{b}{s}
\end{equation}
has local pseudofunction boundary behavior near $s = 0$. Multiplication of (\ref{eqlemma1}) by $(s+1)$ produces an equivalent expression in terms of local pseudofunction boundary behavior on a boundary neighborhood of  $s=0$. Thus, since the term $-b$ is negligible, we obtain that 
\begin{equation}
\label{eqlemma3}
 \frac{1}{s} \left( \zeta(s+1) - \frac{a}{s}\right) - \frac{a+b}{s}
\end{equation}
has local pseudofunction boundary behavior near $s=0$. Under the extra assumption that $K$ is a Beurling-Wiener kernel, we obtain that the Fourier transform of $\tau(x)-b\chi_{[0,\infty)}(x)$ belongs to $PF_{loc}(\mathbb{R})$ again from the division theorem. Thus (\ref{eqlemma1}) and (\ref{eqlemma3}) both have local pseudofunction boundary behavior on the whole line $\Re e\:s = 0$. Noticing that $\tau$ is slowly decreasing, we can apply Theorem \ref{thtauberian1} to conclude $\tau(x)=b+o(1)$. 
\end{proof}

We can now give a proof of Theorem \ref{thmain}.

\begin{proof}[Proof of Theorem \ref{thmain}] As we have already mentioned, it suffices to establish the local pseudofunction boundary behavior of (\ref{treq5}) near $s=1$.
We will show that 
\begin{equation}
\label{preq1}
 \frac{\zeta'(s)}{\zeta(s)}\left(\zeta(s)  - \frac{a}{s-1}\right) + \frac{a+b}{s-1}
\end{equation}    
and
\begin{equation}
\label{preq2}
 \zeta'(s) + \frac{a}{(s-1)^{2}}
\end{equation}    
both admit local pseudofunction boundary behavior (near $s=1$).  This would prove the theorem. Indeed, subtracting (\ref{preq2}) from (\ref{preq1}) gives that
\begin{equation}
\label{preq3}
   -\frac{a\zeta'(s)}{\zeta(s)(s-1)} + \frac{a+b}{s-1}- \frac{a}{(s-1)^{2}}
\end{equation} 
has local pseudofunction boundary behavior. This yields the local pseudofunction boundary behavior of (\ref{treq5}) with $c=-1-b/a$ after division of (\ref{preq3}) by $a$, which implies the sharp Mertens relation in view of Lemma \ref{lemmatranslationzeta}.

For the local pseudofunction boundary behavior of (\ref{preq1}), it is enough to show that
\begin{equation}
\label{preq4}
   \left( - \frac{\zeta'(s)}{\zeta(s)} - \frac{1}{s-1}\right) . \left(\zeta(s) - \frac{a}{s-1}\right)
\end{equation}                                                                     
has local pseudofunction boundary behavior near $s=1$. In fact, (\ref{preq1}) is  (\ref{preq4}) minus (\ref{eqlemma2}), and our claim then follows from Lemma \ref{lemintconv}.
Now, applying the Wiener division theorem \cite[Thm.~7.3]{korevaarbook}, the fact $E\in L^{\infty}(\mathbb{R})$ (Proposition \ref{pnteprop1}), and the hypothesis $E\ast K_{2}\in L^{1}(\mathbb{R})$, we obtain that $\hat{E}\in A_{loc}(I)$ for some neighborhood $I$ of $t=0$. Thus, the second factor $\zeta(s) - a/(s-1)$ of (\ref{preq4})  has $A_{loc}$-boundary behavior on $1+iI$. Thus, the PNT and Lemma \ref{productlpfbb} give the local pseudofunction boundary behavior of (\ref{preq4}).

Finally, it remains to verify that (\ref{preq2}) has local pseudofunction boundary behavior near $s=1$. Note that
$$
\zeta'(s) + \frac{a}{(s-1)^{2}}= \mathcal{L}\{E,s-1\}+s\frac{d}{ds}(\mathcal{L}\{E,s-1\}).
$$ 
Therefore, we should show that the derivative of $\hat{E}(t)$ is a pseudofunction in a neighborhood of $t=0$. Let $0\in I$ be an open interval such that $\hat{E}\in PF_{loc}(I)$ and $\hat{K}_{1}(t)\neq0$ for all $t\in I$. If $\phi=\hat{\varphi}\in\mathcal{D}(I)$, we can write $\varphi=K_{1}\ast Q$, where 
\begin{equation}
\label{diveq1}
\int_{-\infty}^{\infty}(1+|y|)|Q(y)|\mathrm{d}y<\infty,
\end{equation} 
as follows from the division theorem for non-quasianalytic Beurling algebras \cite{beurling1938,korevaarbook}. Thus, making use of (\ref{thmainas1}) and (\ref{diveq1}), we have $(\varphi\ast E)(h)= \int_{-\infty}^{\infty} (E\ast K_{1})(y+h)Q(-y)\mathrm{d}y=o(1/h)$ as $h\to\infty$. But since $\operatorname*{supp} E\subseteq [0,\infty)$, we also have $(\varphi\ast E)(h)=o(1/|h|)$ as $-h\to\infty$. In conclusion, $\langle \hat{E}(t),e^{-ith}\phi(t)\rangle=o(1/|h|)$ for any $\phi\in\mathcal{D}(I)$. Now, if $\phi\in\mathcal{D}(I)$, 
\begin{align*}
\langle \hat{E}'(t),e^{-ith}\phi(t)\rangle&=-\langle \hat{E}(t),e^{-ith}\phi'(t)\rangle+ih\langle \hat{E}(t),e^{-ith}\phi(t)\rangle
\\
&= o(1/|h|)+o(1)=o(1).
\end{align*}
So, $\hat{E}'\in PF_{loc}(I)$ and the proof is complete.
\end{proof}

The same method as above leads to the following corollary.

\begin{corollary}
\label{sharpMertensCor} The sharp Mertens relation and (\ref{thmainas3}), for some kernel with $\hat{K}_{2}(0)\neq0$ and $\int_{-\infty}^{\infty} (1+|y|)|K_{2}(y)|\mathrm{d}y<\infty$, imply (\ref{thmainas1}) for any $K_{1}$ such that $\int_{-\infty}^{\infty} (1+|y|)|K_{1}(y)|\mathrm{d}y<\infty$ and $\operatorname*{supp} \hat{K}_{1}$ is a compact subset of the interior of $\operatorname*{supp} \hat{K}_{2}$. (Here we may have $a=0$.)
\end{corollary}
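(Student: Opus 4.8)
The plan is to run the proof of Theorem~\ref{thmain} backwards. In that proof one extracts the local pseudofunction boundary behavior of (\ref{treq5}) from those of (\ref{preq1}) and (\ref{preq2}); here (\ref{treq5}) is \emph{given} (it is the sharp Mertens hypothesis translated via Lemma~\ref{lemmatranslationzeta}), while the target (\ref{thmainas1}) amounts, after the analogous translation, to showing that $\zeta'(s)+a/(s-1)^{2}$---equivalently $\hat{E}'$---is a local pseudofunction on a boundary neighbourhood of $1+i\operatorname*{supp}\hat{K}_{1}$. So (\ref{preq2}) now plays the role of the unknown, and the remaining two relations from the proof of Theorem~\ref{thmain} are reused. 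Note also that multiplying (\ref{treq5}) by $s-1$ recovers the PNT, hence (\ref{treq3}) as well.

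A preliminary task is to upgrade to $E\in L^{\infty}(\R)$, which I would do in two steps. First, the sharp Mertens relation alone forces $\zeta(\sigma)\ll(\sigma-1)^{-1}$ as $\sigma\to1^{+}$---since $\psi_{1}(x)=\log x+O(1)$, a partial summation in the identity $\log\zeta(s)=\int_{1}^{\infty}x^{-s}\,\mathrm{d}\Pi(x)$ gives $\log\zeta(\sigma)=-\log(\sigma-1)+O(1)$---hence $N(x)\ll x\log x$ by the Chebyshev-type bound of Proposition~\ref{propboundpnt}; in particular $E(y)=O(1+|y|)$, so $E\ast K_{2}$ is genuinely defined. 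Second, with $E$ of at most linear growth and $K_{2}\in L^{1}_{1+|y|}(\R)$, the Wiener division theorem for that Beurling algebra applied to $E\ast K_{2}\in L^{1}(\R)$ gives $\hat{E}\in A_{loc}$ near $t=0$, whence $\zeta(s)$ has local pseudomeasure boundary behavior at $s=1$ and Lemma~\ref{thtauberian3} yields $N(x)\ll x$; thus $E\in L^{\infty}(\R)\subset\mathcal{B}'(\R)$. The same division theorem, now using that $\operatorname*{supp}\hat{K}_{1}$ is a compact subset of the interior of $\operatorname*{supp}\hat{K}_{2}$, gives $\hat{E}\in A_{loc}$ on a neighbourhood of $\operatorname*{supp}\hat{K}_{1}$, so $\zeta(s)-a/(s-1)$ has $A_{loc}$-boundary behavior on a neighbourhood of $1+i\operatorname*{supp}\hat{K}_{1}$; and Lemma~\ref{lemintconv} provides the local pseudofunction boundary behavior of (\ref{eqlemma2}) near $s=1$ together with the value of $b$.

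Now the core step: proving $\hat{E},\hat{E}'\in PF_{loc}$ on a neighbourhood $I$ of $\operatorname*{supp}\hat{K}_{1}$. As in the proof of Theorem~\ref{thmain}, (\ref{preq4}) has local pseudofunction boundary behavior on a neighbourhood of $1+i\operatorname*{supp}\hat{K}_{1}$ by Lemma~\ref{productlpfbb} (the $A_{loc}$-factor being $\zeta(s)-a/(s-1)$, the admissibility being $E\in\mathcal{B}'(\R)$), hence so does (\ref{preq1}) $=$ (\ref{preq4}) $-$ (\ref{eqlemma2}). Writing $\zeta'(s)+a/(s-1)^{2}$ as (\ref{preq1}) minus $a$ times (\ref{treq5}) minus a residual term $(a+b+ac)(s-1)^{-1}$ then produces the wanted local pseudofunction boundary behavior away from $s=1$. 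At $s=1$ one must show $a+b+ac=0$ (equivalently, that the sharp Mertens constant is $c=-1-b/a$): this is forced because $\zeta'(s)+a/(s-1)^{2}$ is the $s$-derivative of $\zeta(s)-a/(s-1)$, whose boundary value is a \emph{continuous} function (it lies in $A_{loc}$), and the distributional derivative of a continuous function has no $\delta$- (nor $\operatorname{p.v.}t^{-1}$-) component, while $(a+b+ac)(s-1)^{-1}$ contributes one unless $a+b+ac=0$.

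Finally I would reverse the closing real-variable step of the proof of Theorem~\ref{thmain}. Picking $\varphi$ with $\hat{\varphi}\in\mathcal{D}(I)$ equal to $1$ on $\operatorname*{supp}\hat{K}_{1}$, so that $K_{1}=K_{1}\ast\varphi$, one gets from $\hat{E}',\hat{E}\in PF_{loc}(I)$ that $\langle\hat{E}'(t),e^{-iht}\hat{\varphi}(t)\rangle=o(1)$ and $\langle\hat{E}(t),e^{-iht}\hat{\varphi}(t)\rangle=o(1)$ as $|h|\to\infty$, hence, exactly as there, $(E\ast\varphi)(h)=o(1/|h|)$; splitting $(E\ast K_{1})(h)=\int_{-\infty}^{\infty}(E\ast\varphi)(h-u)K_{1}(u)\,\mathrm{d}u$ at $|u|=|h|/2$ and controlling the tail by $\int_{-\infty}^{\infty}(1+|y|)|K_{1}(y)|\,\mathrm{d}y<\infty$ gives $(E\ast K_{1})(y)=o(1/y)$, which is (\ref{thmainas1}). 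I expect the delicate point to be precisely the behaviour at $s=1$: it is the only place where the sharp Mertens hypothesis, and not merely the PNT together with (\ref{thmainas3}), is indispensable, and the matching of constants described above is where care is needed; the remainder is bookkeeping with Lemmas~\ref{productlpfbb} and~\ref{lemintconv} and the division theorem, parallel to Theorem~\ref{thmain}. The case $a=0$ is handled just as in Lemma~\ref{lemintconv} and Proposition~\ref{pnteprop1}.
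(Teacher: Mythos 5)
Your overall strategy matches the paper's: translate the sharp Mertens relation through Lemma~\ref{lemmatranslationzeta}, upgrade the bound on $N$ to $N(x)\ll x$, run the Wiener division and Lemmas~\ref{lemintconv} and~\ref{productlpfbb} to get local pseudofunction boundary behavior of (\ref{preq1}) and (\ref{eqlemma2}), isolate $\zeta'(s)+a/(s-1)^{2}-d/(s-1)$ (with $d=ca-a+b$) as a local pseudofunction, and then recover $(E\ast K_{1})(y)=o(1/y)$ by writing $K_{1}=K_{1}\ast\varphi$. Your alternative route to the preliminary bound (Chebyshev-type estimate from the proof of Proposition~\ref{propboundpnt} applied after $\zeta(\sigma)\ll(\sigma-1)^{-1}$, instead of the paper's use of Karamata's Tauberian theorem) is perfectly valid.

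The place where your write-up falls short of a proof is exactly the step you flag as delicate: showing $d=0$. You argue that $\zeta'(s)+a/(s-1)^{2}$ is the derivative of $\zeta(s)-a/(s-1)$, which has continuous ($A_{loc}$) boundary value, and that ``the distributional derivative of a continuous function has no $\delta$- nor $\mathrm{p.v.}\,t^{-1}$-component.'' As stated, that principle is not a usable lemma. The decomposition of a distribution into a $\delta$/$\mathrm{p.v.}$-part plus a remainder is not canonical, and the remainder here is only a local pseudofunction, which need not be locally integrable; so the standard ``integrate and compare jumps/log-singularities'' reasoning does not apply. Continuity of $\hat E$ alone is genuinely insufficient: what one needs is the stronger fact, encoded in $\hat E\in A_{loc}$, that $E\ast\varphi\in L^{1}(\mathbb{R})$ for appropriate $\varphi$. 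The paper makes this the engine of the argument: from $i\hat E'(t)-d(it+0)^{-1}\in PF_{loc}$ it deduces the pointwise asymptotic $(E\ast\varphi)(y)\sim d\hat\varphi(0)/y$ (by unwinding the generalized Riemann--Lebesgue property (\ref{GRLeq}) on the two pieces $yE(y)$ and $\chi_{(0,\infty)}$), and then $E\ast\varphi\in L^{1}$ forces $d\hat\varphi(0)=0$; taking $\hat\varphi(0)\neq0$ (possible since $0\in U=\{\hat K_{2}\neq0\}$) gives $d=0$. Your heuristic points at the right obstruction, but to make it rigorous you end up having to reproduce precisely this real-variable argument; in its present form the step is a gap, not a proof. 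Once that is fixed the rest of your outline, including the splitting at $|u|=|h|/2$ to pass from $(E\ast\varphi)(y)=o(1/y)$ to $(E\ast K_{1})(y)=o(1/y)$, goes through as in the paper.
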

\begin{proof} Let us first verify that the convolution $E\ast K_{2}$ is well-defined. We show that the sharp Mertens relation gives the bound $N(x)=o( x\log x)$. In fact, integrating by parts, we see that $\psi_{1}(x)=\log x+c+o(1)$ implies
$$
\int_{1}^{x}\frac{\mathrm{d}\Pi(u)}{u}=\int_{1}^{x}\frac{\mathrm{d}\psi_{1}(u)}{\log u}= \log \log x+c_{1}+O\left(\frac{1}{\log x}\right),
$$
for some $c_1$. This readily yields $\log \zeta(\sigma)= -\log(\sigma-1)+c_{1}-\gamma+o(1)$ as $\sigma\to1^{+}$, so that
$$
\zeta(\sigma)\sim \frac{e^{c_1-\gamma}}{\sigma-1}. 
$$
Applying the Karamata Tauberian theorem \cite{korevaarbook}, we conclude
$$
\int_{1^{-}}^{x}\frac{\mathrm{d}N(u)}{u}\sim e^{c_1-\gamma}\log x,
$$
and hence integration by parts yields $N(x)=o( x\log x)$ as claimed. Proposition \ref{pnteprop1} then allows us to improve the estimate to $N(x)\ll x$.

Set $U=\{t\in\mathbb{R}:\: \hat{K}_{2}(t)\neq 0\}$. Next, the proof of Lemma \ref{lemintconv}, a Wiener division argument, and Lemma \ref{productlpfbb} give at once that (\ref{eqlemma2}) and (\ref{preq4}), and hence (\ref{preq1}), all have local pseudofunction boundary behavior on $1+iU$. Using Lemma \ref{lemmatranslationzeta}, multiplying (\ref{treq5}) by $a$, and subtracting the resulting expression from  (\ref{preq1}), we obtain that ($d=ca-a+b$)
$$
\zeta'(s)+\frac{a}{(s-1)^{2}}-\frac{d}{(s-1)}=\mathcal{L}\left\{E;s-1\right\}+s\frac{d}{ds}(\mathcal{L}\{E,s-1\})-\frac{d}{(s-1)}.
$$
also has local pseudofunction boundary behavior on $1+iU$. Since $\hat{E}\in A_{loc}(U)\subset PF_{loc}(U)$, we must have $i\hat{E}'(t)-d(it+0)^{-1}\in PF_{loc}(U)$, where $(it+0)^{-1}$ denotes the distributional boundary value of $1/s$ on $\Re e\: s=0$. If we now take an arbitrary $\varphi\in\mathcal{S}(\mathbb{R})$ such that $\hat{\varphi}\in \mathcal{D}(U)$, we obtain that
\begin{align*}
y(E\ast\varphi)(y)&=d\hat{\varphi}(0)+o(1)+
\int_{-\infty}^{y} ((y-u)E(y-u)-d)\varphi(u)\mathrm{d}u
\\
&
\quad +\int_{-\infty}^{y} E(y-u)u\varphi(u)\mathrm{d}u
\\
&
=d\hat{\varphi}(0)+o(1), \quad y\to\infty,
\end{align*}
namely,
$$
(E\ast\varphi)(y)\sim \frac{d}{y}\hat{\varphi}(0) ,\quad y\to\infty.
$$
Since $0\in U$ and $(E\ast \varphi)\in L^{1}(\mathbb{R})$ because $\hat{E}\in A_{loc}(U)$, we conclude $d=0$, upon taking $\varphi$ with $\hat{\varphi}(0)\neq0$. So, $(E\ast\varphi)(y)=o(1/y)$ as $y\to\infty$ for any $\varphi$ with $\hat{\varphi}\in \mathcal{D}(U)$. If $K_1$ satisfies the conditions from the statement, we can write $K_{1}=K_{1}\ast\varphi$ for any $\hat{\varphi}\in\mathcal{D}(U)$ being equal to 1 on $\operatorname*{supp}\hat{K}_{1}$. Therefore, $(E\ast K_{1})(y)= ((E\ast\varphi)\ast K_{1})(y)=o(1/y)$ as well.
\end{proof}

\begin{remark}\label{pnter2} We end this section with some remarks on possible variants for the assumptions on the convolution kernels in Theorem \ref{thmain}.
\begin{itemize}
\item[(i)] If one the of the kernels $K_{1}$ or $K_{2}$ is non-negative, we can replace the requirements 
\begin{equation}
\label{eq1rk}
\int_{-\infty}^{\infty}(1+|y|)^{1+\varepsilon}|K_{j}(y)|\mathrm{d}y<\infty
\end{equation}
by the weaker ones
\begin{equation*}
\int_{-\infty}^{\infty}(1+|y|)|K_{j}(y)|\mathrm{d}y<\infty.
\end{equation*}
In fact, the conditions (\ref{eq1rk}) were only used in order to ensure the existence of the convolutions via Proposition \ref{propboundpnt}, but the rest of our arguments still works if we would have a priori known the better bound $N(x)\ll x\log x$. Now, if $K$ is non-negative, a bound $(E\ast K)(y)=O(1)$ necessarily implies $N(x)\ll x$. In fact, set $T(y)=e^{-y}N(e^{y})$. Since $N$ is non-decreasing, we have $T(h)e^{-y}\leq T(y+h)$ for $h\geq 0$. Setting $C^{-1}=\int_{-\infty}^{0} e^{y}K(y)\mathrm{d}y$, we obtain
$$
T(h)\leq C\int_{0}^{\infty} T(y+h) K(-y)\mathrm{d}y\leq C (T\ast K)(h)\ll 1.
$$
\item[(ii)] The Wiener type division arguments can be completely avoided if $\hat{K}_{j}$ is $C^{\infty}$ near 0 (or on $\mathbb{R}$ whenever the global non-vanishing is required). Indeed, in this case the division can be performed in the Fourier transform side as the multiplication of a distribution by a smooth function, a trivial procedure in distribution theory. In particular, Theorem \ref{thmainsimplified} can be shown without appealing to Wiener type division theorems.
\item [(iii)] In connection with the previous comment, one can even drop the integrability conditions on $K_{j}$ and employ distribution kernels. It is well known that the space of convolutors for tempered distributions $\mathcal{O}'_{C}(\mathbb{R})$ satisfies $\mathcal{F}(\mathcal{O}'_{C}(\mathbb{R}))\subset C^{\infty}(\mathbb{R})$ \cite{estrada-kanwal,p-s-v,schwartz}. So, Theorem \ref{thmain} holds if we assume that $K_1,K_2\in \mathcal{O}'_{C}(\mathbb{R})$ are such that $\hat{K}_{j}(0)\neq 0$ and (\ref{thmainas1}) and (\ref{thmainas2}) are satisfied. A list of useful kernels belonging to $\mathcal{O}'_{C}(\mathbb{R})$ and having nowhere vanishing Fourier transforms is discussed in Ganelius' book \cite[Sect.~1.5, p.~15]{ganeliusbook}. An even more general result is possible: we can weaken $K_1,K_2\in \mathcal{O}'_{C}(\mathbb{R})$ to $K_{1},K_{2}\in \mathcal{S}'(\mathbb{R})$, $\hat{K}_{j}$ is $C^{\infty}$ and non-zero in a neighborhood of $0$, and $E\ast K_{j}$ exists in the sense of $\mathcal{S}'$-convolvability \cite{kaminski82}. 
\end{itemize}
\end{remark}

\section{The Landau relations $M(x)=o(x)$ and $m(x)=o(1)$} \label{sectionotherequivalences}
The section is devoted to conditions on $N$ that imply the equivalence between (\ref{eqlandaurelation}) and (\ref{eqlandausharprelation}). As in the previous section, we do not need to assume that the generalized number system is discrete. In the general case, we define $M(x)=\int_{1^{-}}^{x}\mathrm{d}M(u)$, where the measure $\mathrm{d}M$ is the multiplicative convolution inverse \cite{diamond1,diamond-zhangbook} of $\mathrm{d}N$, that is, $\mathrm{d}M=\exp^{\ast_{M}}(-\mathrm{d}\Pi)$. So, we consider the Landau relations 
\begin{equation}
\label{eqMoebius1}
M(x)=o(x)
\end{equation}
and 
\begin{equation}
\label{eqMoebius2}
m(x)=\int_{1^{-}}^{x}\frac{\mathrm{d}M(u)}{u}=o(1).
\end{equation}

It is a simple fact that (\ref{eqMoebius2}) always implies (\ref{eqMoebius1}). This follows from integration by parts:
$$
M(x)=\int_{1^{-}}^{x}u\mathrm{d}m(u)= xm(x)- \int_{1}^{x}m(u)\mathrm{d}u=o(x).
$$

Our aim in this section is to show that the conditional converse implication holds under a weaker hypothesis than in Theorem \ref{thmainmobius}. 
\begin{theorem} \label{thmmobius} Suppose that $N(x)\ll x$ and there are $a > 0$ and $K \in L^{1}(\mathbb{R})$ such that $\hat{K}(0) \neq 0$ and
	\begin{equation} \label{eqmainconditionmobius}
	E \ast K \in L^{1}(\mathbb{R}),             
	\end{equation}
where $E$ is the remainder function determined by (\ref{approximationN1}). Then, 
\[
M(x)=o(x) \quad \mbox{implies} \quad m(x)=o(1).
\]
\end{theorem}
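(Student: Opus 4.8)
The plan is to mimic the strategy used for the sharp Mertens relation, translating both Landau relations into local pseudofunction boundary behavior of appropriate combinations of $\zeta$ and $1/\zeta$ near $s=1$, and then transferring the information provided by (\ref{eqmainconditionmobius}) through a Wiener division argument and Lemma~\ref{productlpfbb}. First I would record the transcriptions. Since $\mathrm{d}M$ is the multiplicative convolution inverse of $\mathrm{d}N$, its Mellin-Stieltjes transform is $1/\zeta(s)$. By Theorem~\ref{thtauberian1} applied to the (slowly decreasing after correction, or better: via the $O$-version) function $x\mapsto M(e^{x})$, the relation $M(x)=o(x)$ is equivalent to $\mathcal{L}\{\mathrm{d}M;s\}=1/\zeta(s)$ having local pseudofunction boundary behavior on $\Re e\:s=1$ (note there is no polar term, since $M(x)=o(x)$ corresponds to $a=b=0$ in the relevant normalization). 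Similarly, writing $m_1(y)=m(e^{y})$, which has Laplace transform $\mathcal{L}\{m(e^{y});s\}=1/((s+1)\zeta(s+1))$, the relation $m(x)=o(1)$ holds if and only if $1/((s+1)\zeta(s+1))$ — equivalently, after the shift and multiplication by the smooth factor $s$, the expression $s/\zeta(s)\cdot(s-1)^{-1}$ minus its would-be constant — has local pseudofunction boundary behavior on $\Re e\:s=1$; more precisely $m(x)=o(1)$ is equivalent to $\dfrac{1}{(s-1)\zeta(s)}$ having local pseudofunction boundary behavior near $s=1$ once the constant term is subtracted, and since $M(x)=o(x)$ already forces $1/\zeta(s)\to 0$ at $s=1$ in the pseudofunction sense, no extra constant survives and the two are in fact equivalent away from $s=1$. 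So, exactly as in Section~\ref{section sharp Mertens}, the whole problem reduces to showing that $1/((s-1)\zeta(s))$ has local pseudofunction boundary behavior on a boundary neighborhood of the single point $s=1$, given that $1/\zeta(s)$ does and given the hypothesis (\ref{eqmainconditionmobius}).

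Next I would extract the needed boundary information from (\ref{eqmainconditionmobius}). Since $N(x)\ll x$ we have $E\in L^{\infty}(\mathbb{R})$, and $E\ast K\in L^1(\mathbb{R})$ together with the Wiener division theorem for the weighted Beurling algebra (here $L^1(\mathbb{R})$ suffices, $\omega\equiv 1$) gives $\hat{E}\in A_{loc}(I)$ for some open interval $I\ni 0$ on which $\hat K$ does not vanish. Translating, $\zeta(s)-a/(s-1)=\mathcal{L}\{E;s-1\}\cdot(s-1)+\text{(smooth)}$, so $\zeta(s)-a/(s-1)$ has $A_{loc}$-boundary behavior on $1+iI$; equivalently, $(s-1)\zeta(s)-a$, and hence $(s-1)\zeta(s)$, has $A_{loc}$-boundary behavior near $s=1$, with boundary value equal to $a\neq 0$ at $s=1$. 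Now the key algebraic identity is
\[
\frac{1}{(s-1)\zeta(s)}-\frac{1}{a}\cdot\frac{1}{\zeta(s)}
=\frac{1}{\zeta(s)}\cdot\frac{1}{s-1}\left(1-\frac{(s-1)\zeta(s)}{a}\right)
=\frac{-1}{a\,\zeta(s)}\cdot\frac{(s-1)\zeta(s)-a}{s-1}.
\]
The second factor $\big((s-1)\zeta(s)-a\big)/(s-1)$ is, up to the smooth multiplier $1/(s-1)$ which is harmless away from $s=1$ (and here even at $s=1$, since the numerator vanishes there so no pole is created), an element of $A_{loc}$ near $s=1$ — this is where $A(\mathbb{R})$ being closed under such operations, or a further division step, is used. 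Meanwhile $1/\zeta(s)$ has local pseudofunction boundary behavior near $s=1$ by hypothesis, and $M(x)\ll x$ follows either from $M(x)=o(x)$ directly or can be arranged so that the underlying distribution $f$ in Lemma~\ref{productlpfbb} lies in $\mathcal{B}'(\mathbb{R})$. Thus Lemma~\ref{productlpfbb} applies to the product and yields that $\dfrac{1}{(s-1)\zeta(s)}-\dfrac{1}{a}\cdot\dfrac{1}{\zeta(s)}$ has local pseudofunction boundary behavior near $s=1$. Adding back $\dfrac{1}{a}\cdot\dfrac{1}{\zeta(s)}$, which has local pseudofunction boundary behavior near $s=1$ by the hypothesis $M(x)=o(x)$, gives the desired conclusion for $\dfrac{1}{(s-1)\zeta(s)}$, and hence, via the transcription above and Theorem~\ref{thtauberian1} applied to the slowly decreasing function obtained after subtracting the appropriate constant (which one checks is $0$ here), the relation $m(x)=o(1)$.

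The main obstacle I expect is bookkeeping around the constant term and the justification that Lemma~\ref{productlpfbb} is applicable — namely, verifying that the factor built from $\zeta$ genuinely has $A_{loc}$-boundary behavior (not merely $PF_{loc}$) on a neighborhood of $s=1$, and that the factor $1/\zeta(s)$ corresponds to a distribution with support in $[0,\infty)$ lying in $\mathcal{B}'(\mathbb{R})$ (equivalently, that $M(e^x)e^{-x}$ is bounded, which uses $N(x)\ll x$ to control $M$ via $|M(x)|\le$ a Chebyshev-type bound, or directly $M(x)=o(x)\Rightarrow M(x)\ll x$). A secondary subtlety is that, unlike the sharp Mertens case, here there is no nonzero polar/constant part to carry along, so one must be careful that subtracting the wrong constant does not silently destroy the pseudofunction property; the cleanest route is to argue entirely at the level of boundary distributions near $s=1$, establish $PF_{loc}$ there, glue with the already-known $PF_{loc}$ behavior on the rest of $\Re e\:s=1$ (valid off $s=1$ because $1/(s-1)$ is a smooth multiplier), and only at the very end invoke Theorem~\ref{thtauberian1}, at which point the constant is forced to be $0$ by the relation $M(x)=o(x)$ and integration by parts $m(x)=M(x)/x+\int_1^x M(u)u^{-2}\,\mathrm{d}u$, both of whose pieces are $o(1)$.
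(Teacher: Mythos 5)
Your overall strategy is the paper's: translate $M(x)=o(x)$ and $m(x)=o(1)$ into local pseudofunction boundary behavior of $1/\zeta(s)$ and $1/((s-1)\zeta(s))$ near $s=1$, use the Wiener division theorem (with $E\in L^{\infty}$ from $N(x)\ll x$, and $E\ast K\in L^{1}$) to get $A_{loc}$-boundary behavior of $\zeta(s)-a/(s-1)$ on a segment around $s=1$, then write $1/((s-1)\zeta(s))$ (minus a constant) as a product of $1/\zeta(s)$ with that $A_{loc}$-quantity and invoke Lemma~\ref{productlpfbb}. This is exactly the decomposition the paper uses, namely
\[
\frac{1}{s\zeta(s+1)}-\frac{1}{a}=-\frac{1}{a\zeta(s+1)}\left(\zeta(s+1)-\frac{a}{s}\right).
\]

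Two slips in your writeup should be fixed, although neither poisons the strategy. First, the left-most expression in your displayed chain is wrong: expanding the middle and right members gives $\dfrac{1}{(s-1)\zeta(s)}-\dfrac{1}{a}$, not $\dfrac{1}{(s-1)\zeta(s)}-\dfrac{1}{a}\cdot\dfrac{1}{\zeta(s)}$. As a result your step "adding back $\tfrac{1}{a\zeta(s)}$" is aimed at the wrong quantity; what one actually adds back is the constant $1/a$, which is trivially a local pseudofunction, so the conclusion survives but the stated reasoning does not. Second, your final justification that the constant in Theorem~\ref{thtauberian1} is zero, via "$m(x)=M(x)/x+\int_1^x M(u)u^{-2}\,\mathrm{d}u$, both of whose pieces are $o(1)$," is circular: $M(u)=o(u)$ gives $M(u)/u^{2}=o(1/u)$, whose integral need only be $o(\log x)$, and the vanishing of $\int_1^x M(u)u^{-2}\,\mathrm{d}u$ is essentially the content of $m(x)=o(1)$ itself. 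No such determination is needed: once $1/(s\zeta(s+1))$ is shown to have local pseudofunction boundary behavior on all of $\Re e\:s=0$, Theorem~\ref{thtauberian1} applied to the slowly decreasing function $m(e^{y})$ with $a=b=0$ already yields $m(e^{y})=o(1)$, there being no polar or constant term to subtract. With these corrections your argument coincides with the paper's.
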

\begin{proof}
Note first that 
$$ 
m(e^{x})=\frac{M(e^{x})}{e^{x}}+ \int_{0}^{x}\frac{M(e^{y})}{e^{y}}\mathrm{d}y
$$
is slowly decreasing because $M(e^{x})=o(e^{x})$. Since $\mathcal{L}\{\mathrm{d}M(e^{x});s\} = 1/\zeta(s)$, the Abelian part of Theorem \ref{thtauberian2} gives that $M(x)=o(x)$ implies that $1/\zeta(s)$ has local pseudofunction boundary behavior on the whole line $\Re e\:s=1$. On the other hand, the Laplace transform of the slowly decreasing function $m(e^{x})$ is $\mathcal{L}\{m(e^{x});s\} = 1/(s\zeta(s+1))$, $\Re e\: s>0 $; one then deduces from Theorem \ref{thtauberian1} that it suffices to show that this analytic function admits local pseudofunction behavior on the line $\Re e \: s = 0$. Since $1/s$ is smooth away from $s =0$, it is enough to establish the local pseudofunction boundary behavior of $1/(s\zeta(s+1))$ near $s = 0$. We now verify the latter property. By employing the Wiener division theorem \cite[Thm. 7.3]{korevaarbook}, we have that $\hat{E}\in A_{loc}(I)$ for some open interval $I$ containing $0$. This leads to the $A_{loc}$-boundary behavior of $(\zeta(s+1)/(s+1) - a/s)$ on $iI$; multiplying by $(s+1)$ and adding $a$, we conclude that $\zeta(s+1)-a/s$ has boundary values in the local Wiener algebra on the boundary line segment $iI$.  
The local pseudofunction boundary behavior of $1/s\zeta(s+1)$  near $s = 0$ now follows from
\[
 \frac{1}{s\zeta(s+1)} = -\frac{1}{a\zeta(s+1)} \cdot \left(\zeta(s+1) - \frac{a}{s}\right)+\frac{1}{a}
\]
and Lemma \ref{productlpfbb}.
\end{proof}

\begin{remark}
\label{pnterk3} 
We mention some variants of Theorem \ref{thmmobius}:
\begin{itemize}
\item [(i)] If the kernel $K$ is non-negative, the bound $N(x)\ll x$ becomes superfluous because it is implied by (\ref{eqmainconditionmobius}), see Remark \ref{pnter2}(i).
\item [(ii)] If only a bound $N(x)\ll x\log^{\alpha} x$,  $\alpha>0$, is initially known for $N$, we can compensate it by strengthening the assumption on $K$ to $\int_{-\infty}^{\infty}(1+|y|)^{\alpha}|K(y)|\mathrm{d}y<\infty$. As a matter of fact, the bound $N(x)\ll x$ would then be implied by Proposition~\ref{pnteprop1}.
\item [(iii)] The comments from Remark \ref{pnter2}(ii) and Remark \ref{pnter2}(iii) also apply to Theorem \ref{thmmobius}. In order to use distribution kernels $K$, one assumes that $N(x)\ll x\log^{\alpha} x$ for some $\alpha$ to ensure that $E$ is a tempered distribution. 

\end{itemize}
\end{remark}

\section{Examples} \label{pnte examples}
We shall now construct examples in order to prove  Proposition \ref{prop1pnteex}. In addition, we give an example of a generalized number system such that $M(x)=o(x)$, $m(x)=o(1)$, the condition (\ref{eqmainconditionmobius}) holds for any kernel $K\in\mathcal{S}(\mathbb{R})$, but for which (\ref{thmainas3simplified}) is not satisfied.

\begin{example} (Proposition \ref{prop1pnteex}(i)). \label{pnteex1} By constructing an example \cite{diamond-zhang,diamond-zhangbook}, Diamond and Zhang showed that 
the PNT and 
\begin{equation}
\label{Odensitycond1}
N(x)=ax+O\left(\frac{x}{\log x}\right)
\end{equation}
do not imply the sharp Mertens relation in general. We slightly modify their arguments to produce an example that satisfies the stronger relation (\ref{thmainas1simplified}).

 Let $\omega$ be a positive non-increasing function on $[1,\infty)$ such that 
\begin{equation}
\label{pnteex1eq1}
\int_{2}^{\infty}\frac{\omega(x)}{x\log x}\mathrm{d}x=\infty,
\end{equation}
and 
\begin{equation}
\label{pnteex1eq1.1} \frac{\omega(x^{1/n})}{\omega(x)}\leq Cn^{\alpha},
\end{equation}
where $C,\alpha>0$. For example, $\omega(x)=1/\log\log x$ for $x\geq e^{e}$ and $\omega(x)=1$ for $x\in [1,e^{e}]$ satisfies (\ref{pnteex1eq1}) and the better inequality $\omega(x^{1/n})/\omega(x)\leq 1+\log n$. 

We construct here  a generalized number system satisfying the PNT, the asymptotic estimate
\begin{equation}
\label{pnteex1eq2}
N(x)=ax+O\left(\frac{x\omega(x)}{\log x}\right),
\end{equation}
for some $a>0$, but for which the sharp Mertens relation fails. Upon additionally choosing $\omega$ with $\omega(x)=o(1)$, we obtain (\ref{thmainas1simplified}).

We prove that 
$$
\mathrm{d}\Pi(u)= \frac{1-u^{-1}}{\log u}\mathrm{d}u+  \left(\frac{1-u^{-1}}{\log u}\right)^{2}\omega(u)\mathrm{d}u
$$
fulfills our requirements. ($\omega(u)=1$ is the example from \cite{diamond-zhang,diamond-zhangbook}). For the PNT,
\begin{align*}
\Pi(x)&=\operatorname*{Li}(x)+ O(\log\log x)+\left(\int_{2}^{\sqrt{x}}+\int_{\sqrt{x}}^{x}\right)\frac{O(1)}{\log^{2} u}\mathrm{d}u
\\
&
=\frac{x}{\log x}+O\left(\frac{x}{\log^{2} x}\right),
\end{align*}
because $\omega$ is bounded. Next, by (\ref{pnteex1eq1}),
$$
\psi_{1}(x)-\log x=\int_{1}^{x}\frac{\log u}{u}\mathrm{d}\Pi(u)-\log x\geq -1+\frac{1}{4}\int_{2}^{x} \frac{\omega(u)}{u\log u}\mathrm{d}u\to\infty.
$$
To get (\ref{pnteex1eq2}), we literally apply the same convolution method as in \cite[Sect. 14.4]{diamond-zhangbook}. Using that $\exp^{\ast_{M}} ((1-u^{-1})/\log u \:\mathrm{d}u)= \chi_{[1,\infty)}(u)\mathrm{du}+\delta(u-1)$ (with $\delta$ the Dirac delta) and that the latter measure has distribution function $x$ (for $x\geq1$), one easily checks that
$$
N(x)=x\int_{1^{-}}^{x}\exp^{\ast_{M}}\left(\mathrm{d}\nu\right)=x\left(1+\sum_{n=1}^{\infty}\frac{1}{n!}\int_{1}^{x}\mathrm{d}\nu^{\ast_{M}n}\right),
$$
where 
$$
\mathrm{d}\nu(u)=\frac{(1-u^{-1})^{2}}{u\log^{2} u}\omega(u)\mathrm{d}u.
$$
Since 
$$
c=\int_{1}^{\infty}\mathrm{d}\nu\leq O(1)\int_{1}^{\infty}\frac{(1-u^{-1})^{2}}{u\log^{2} u}\mathrm{d}u<\infty,
$$
we obtain $N(x)=x(e^{c}-R(x))$ with
$$R(x)=\sum_{n=1}^{\infty}\frac{1}{n!}\int_{x}^{\infty}\mathrm{d}\nu^{\ast_{M}n}=\sum_{n=1}^{\infty}\frac{1}{n!}\idotsint_{x< u_{1}u_2\dots u_{n}}\mathrm{d}\nu(u_1)\dots \mathrm{d}\nu(u_{n}).$$

We estimate $R(x)$ for $x\geq2$. Since all variables in the multiple integrals from the above summands are greater than 1, introducing the constraint $u_n>x$ gives $\idotsint_{x< u_{1}u_2\dots u_{n}}\geq \int_{1}^{\infty}\dots\int_{1}^{\infty}\int_{u_n>x}$, namely,
$$
\int_{x}^{\infty}\mathrm{d}\nu^{\ast_{M}n}\geq \frac{c^{n-1}}{4}\int_{x}^{\infty}\frac{\omega(u)}{u\log^{2} u}\mathrm{d}u.
$$
On the other hand, as was noticed in \cite{diamond-zhang}, at least one of the variables $u_j$ should be $> x^{1/n}$, and therefore
$$
\int_{x}^{\infty}\mathrm{d}\nu^{\ast_{M}n}\leq nc^{n-1}\int_{x^{1/n}}^{\infty}\frac{\omega(u)}{u\log^{2} u}\mathrm{d}u\leq Cn^{2+\alpha}c^{n-1}\int_{x}^{\infty}\frac{\omega(u)}{u\log^{2} u}\mathrm{d}u.
$$
Adding up these estimates, we obtain
\begin{equation*}
0<  C_{1}\int_{x}^{\infty}\frac{\omega(u)}{u\log^{2} u}\mathrm{d}u\leq\frac{e^{c}x-N(x)}{x}\leq C_{2} \int_{x}^{\infty}\frac{\omega(u)}{u\log^{2} u}\mathrm{d}u
\end{equation*}
with 
$$C_{1}=\frac{e^{c}-1}{4c} \quad \mbox{and} \quad C_{2}=C\sum_{n=0}^{\infty}\frac{c^{n}}{n!}(n+1)^{\alpha+1}.$$
In particular, the upper bound yields (\ref{pnteex1eq2}) with $a=e^{c}$ because $\omega$ is non-increasing.
\end{example}

\smallskip

\begin{example}[Proposition \ref{prop1pnteex}(ii)]\label{pnteex2} We now give an example to prove that the PNT and the condition (\ref{thmainas3simplified}) do not imply a sharp Mertens relation in general. The generalized number system has the form
\begin{equation}\label{pnteex2eq1}
\mathrm{d}\Pi(u)=  \frac{1-u^{-1}}{\log u}\mathrm{d}u+  \frac{f(\log u)}{\log^{2} u}\chi_{[A,\infty)}(u)\mathrm{d}u
\end{equation}
where $f$ will be suitably chosen below, and $A\geq e$. We suppose that $|f(y)|\leq y/2$ on $[\log A,\infty)$ in order to ensure that $\mathrm{d}\Pi$ is a positive measure. Observe that we do not assume here that $f$ is non-negative, actually letting $f$ be oscillatory is important for our construction. We start with a preliminary lemma that gives a condition for the function $N$ to satisfy (\ref{thmainas3simplified}).

\begin{lemma}\label{lemma suff L1} Assume that $\int_{\log A}^{\infty}y^{-2}|f(y)|\mathrm{d}y<\infty$ and let $a=\exp(\int_{\log A}^{\infty}y^{-2}f(y)\mathrm{d}y)$. Then, (\ref{thmainas3simplified}) holds if 
\begin{equation}
\label{pnteex2eq2} \int_{x}^{\infty} \frac{f(y)}{y^{2}}\mathrm{d}y\in L^{1}(\mathbb{R}).
\end{equation}
Conversely, (\ref{thmainas3simplified}) implies (\ref{pnteex2eq2}) if $\int_{\log A}^{\infty}y^{-2}|f(y)|\mathrm{d}y<\pi$. 
\end{lemma}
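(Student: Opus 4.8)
\emph{Plan.} The idea is to rephrase each of the two conditions as the $L^{1}(\mathbb{R})$-membership of an explicit function built from the zeta function, and then to pass between these two functions by means of a convolution identity whose ``symbol'' is invertible in a Wiener-type algebra precisely as long as $\int_{\log A}^{\infty}y^{-2}|f(y)|\,\mathrm{d}y<\pi$. Set $g(y)=y^{-2}f(y)\chi_{[\log A,\infty)}(y)$, so that $g\in L^{1}(\mathbb{R})$ is supported in $[\log A,\infty)\subseteq(0,\infty)$, and write $\mathcal{G}(z)=\mathcal{L}\{g;z\}=\int_{0}^{\infty}e^{-zy}g(y)\,\mathrm{d}y$, analytic on $\Re e\: z>0$ and continuous on $\Re e\: z\geq 0$, with $\mathcal{G}(0)=\int g=\log a$. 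Substituting $u=e^{y}$ in (\ref{defzetaextended}) for the system (\ref{pnteex2eq1}) and using $\exp^{\ast_{M}}\!\big(\tfrac{1-u^{-1}}{\log u}\,\mathrm{d}u\big)=\chi_{[1,\infty)}(u)\,\mathrm{d}u+\delta(u-1)$ (recalled in Example \ref{pnteex1}) gives $\zeta(s)=\tfrac{s}{s-1}e^{\mathcal{G}(s-1)}$, hence, after an integration by parts in $\zeta(s)=\int_{1^{-}}^{\infty}x^{-s}\,\mathrm{d}N(x)$ and the same change of variables, $\mathcal{L}\{N(e^{w})e^{-w};z\}=\zeta(z+1)/(z+1)=z^{-1}e^{\mathcal{G}(z)}$.

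Since $N(x)\ll x$ (indeed $N(e^{w})e^{-w}=1+\int_{0}^{w}h$ with $h=\sum_{n\geq 1}g^{\ast n}/n!\in L^{1}$), the function $\psi(w):=\big(N(e^{w})e^{-w}-a\big)\chi_{(0,\infty)}(w)$ is bounded and supported in $[0,\infty)$, and
\[
\mathcal{L}\{\psi;z\}=\frac{e^{\mathcal{G}(z)}-a}{z},\qquad \int_{1}^{\infty}\Big|\frac{N(x)-ax}{x^{2}}\Big|\,\mathrm{d}x=\int_{0}^{\infty}|\psi(w)|\,\mathrm{d}w,
\]
so (\ref{thmainas3simplified}) is equivalent to $\psi\in L^{1}(\mathbb{R})$. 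Likewise $\theta(w):=\big(\int_{w}^{\infty}g(y)\,\mathrm{d}y\big)\chi_{(0,\infty)}(w)$ is bounded and supported in $[0,\infty)$, with $\mathcal{L}\{\theta;z\}=(\mathcal{G}(0)-\mathcal{G}(z))/z$; as $\int_{w}^{\infty}g$ is the constant $\log a$ on the \emph{finite} interval $(0,\log A)$, condition (\ref{pnteex2eq2}) is equivalent to $\theta\in L^{1}(\mathbb{R})$. Because $e^{\mathcal{G}(z)}-a=a\big(e^{\mathcal{G}(z)-\mathcal{G}(0)}-1\big)$, these two transforms obey the identity
\[
\mathcal{L}\{\psi;z\}=-a\,\Xi(z)\,\mathcal{L}\{\theta;z\},\qquad \Xi(z):=\Psi\big(\mathcal{G}(z)-\mathcal{G}(0)\big),\quad \Psi(\xi):=\frac{e^{\xi}-1}{\xi}=\sum_{n\geq 0}\frac{\xi^{n}}{(n+1)!}.
\]

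The crux is that $\mathcal{G}(z)-\mathcal{G}(0)=\mathcal{L}\{g-(\log a)\delta_{0};z\}$, where $g-(\log a)\delta_{0}$ is a (signed) measure supported in $[0,\infty)$ of total variation $\leq\|g\|_{1}+|\log a|\leq 2\|g\|_{1}$. Feeding this into the entire series for $\Psi$ — which converges absolutely in the measure algebra since $\sum_{n}(2\|g\|_{1})^{n}/(n+1)!<\infty$ — shows $\Xi(z)=\mathcal{L}\{\nu;z\}$ for a measure $\nu=c\,\delta_{0}+\nu_{1}$ supported in $[0,\infty)$, with $c=\Psi(-\log a)\neq 0$ and $\nu_{1}\in L^{1}([0,\infty))$. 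If moreover $\|g\|_{1}<\pi$, then $\|g-(\log a)\delta_{0}\|\leq 2\|g\|_{1}<2\pi$, which is exactly the radius of convergence of the Bernoulli series $\xi/(e^{\xi}-1)=\sum_{k}(B_{k}/k!)\xi^{k}$; the same substitution then gives $1/\Xi(z)=\mathcal{L}\{\tilde{\nu};z\}$ with $\tilde{\nu}=c^{-1}\delta_{0}+\tilde{\nu}_{1}$, $\tilde{\nu}_{1}\in L^{1}([0,\infty))$. (Equivalently: $\nu$ is invertible in the Banach algebra $\mathbb{C}\delta_{0}\oplus L^{1}([0,\infty))$, whose characters are $z\mapsto\mathcal{L}\{\cdot;z\}$ on $\Re e\: z\geq 0$ together with the value $c$ ``at infinity'', because $|\mathcal{G}(z)-\mathcal{G}(0)|\leq 2\|g\|_{1}<2\pi$ keeps $\Xi$ off the zero set $2\pi i(\mathbb{Z}\setminus\{0\})$ of $\Psi$.) Translating the displayed identity and its inverse back through uniqueness of Laplace transforms of bounded functions supported in $[0,\infty)$ yields $\psi=-a\,(c\,\theta+\nu_{1}\ast\theta)$ and, when $\|g\|_{1}<\pi$, $\theta=-a^{-1}(c^{-1}\psi+\tilde{\nu}_{1}\ast\psi)$. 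Since convolution by an $L^{1}(\mathbb{R})$ function maps $L^{1}(\mathbb{R})$ into itself, the first relation gives $\theta\in L^{1}\Rightarrow\psi\in L^{1}$, i.e. (\ref{pnteex2eq2}) $\Rightarrow$ (\ref{thmainas3simplified}) with no smallness needed, and the second gives $\psi\in L^{1}\Rightarrow\theta\in L^{1}$, i.e. (\ref{thmainas3simplified}) $\Rightarrow$ (\ref{pnteex2eq2}) under $\int_{\log A}^{\infty}y^{-2}|f(y)|\,\mathrm{d}y<\pi$.

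The main obstacle will be the bookkeeping forced by the fact that $\psi$ and $\theta$ are only \emph{eventually} in $L^{1}$: they equal the nonzero constants $-a$ and $\log a$ on $(-\infty,0)$, so one cannot argue with Fourier transforms on $\mathbb{R}$ but must stay on the half-line, working with one-sided Laplace transforms and the algebra $\mathbb{C}\delta_{0}\oplus L^{1}([0,\infty))$; in particular one has to verify that the inverting measure $\tilde{\nu}$ is genuinely supported in $[0,\infty)$ and not merely in $\mathbb{C}\delta_{0}\oplus L^{1}(\mathbb{R})$. The only other delicate point is pinning down that the threshold is exactly $\pi$: it is the radius of convergence $2\pi$ of $\xi/(e^{\xi}-1)$ matched against the sharp bound $|\mathcal{G}(z)-\mathcal{G}(0)|=\big|\int_{0}^{\infty}(e^{-zy}-1)g(y)\,\mathrm{d}y\big|\leq 2\|g\|_{1}$ on the closed right half-plane. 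Everything else — convergence of the measure series, the convolution-preserves-$L^{1}$ step, and the elementary Laplace-transform computations — is routine.
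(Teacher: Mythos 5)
Your proof is correct, but it is organised differently from the one in the paper, which (crediting Kahane) works with two-sided Fourier transforms: it shows $\hat E\in A(\mathbb{R})$ by factoring
$\hat E(t)=e^{L(0)}\,\dfrac{e^{L(t)-L(0)}-1}{L(t)-L(0)}\cdot\dfrac{L(t)-L(0)}{it}$,
where the middle factor lies in $B(\mathbb{R})=\mathcal{F}(\text{finite measures})$ because entire functions operate on the Banach algebra $B(\mathbb{R})$, and $B(\mathbb{R})$ multiplies $A(\mathbb{R})$; the converse uses that $\xi/(e^{\xi}-1)$ is analytic on $|\xi|<2\pi$ and $\|L-L(0)\|_{B(\mathbb{R})}\le 2\int y^{-2}|f|<2\pi$. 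You instead stay on the half-line, working in the unitisation $\mathbb{C}\delta_0\oplus L^{1}([0,\infty))$ and phrasing the same identity as $\psi=-a\,\nu\ast\theta$ with $\nu=\Psi(g-(\log a)\delta_0)$. The core algebraic content — the Frullani computation $\zeta(s+1)=\tfrac{s+1}{s}e^{\mathcal{G}(s)}$, the decomposition through $\Psi(\xi)=(e^{\xi}-1)/\xi$, the bound $\le 2\|g\|_1$, and the $2\pi$ radius of $1/\Psi$ — is identical. Two small points about your write-up: the ``nonzero constants on $(-\infty,0)$'' concern evaporates once you truncate $\psi$ and $\theta$ to $(0,\infty)$, exactly as you do; and the support question for $\tilde\nu$ is immediate, since the Bernoulli series $\sum_k (B_k/k!)(g-(\log a)\delta_0)^{\ast k}$ converges in total variation and every term is supported in the convolution sub-semigroup $[0,\infty)$. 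What your one-sided route costs you, then, is precisely this extra support bookkeeping, which the paper avoids by not requiring the inverting measure to live on a half-line: membership of the middle factor in $B(\mathbb{R})$ and of $(L(t)-L(0))/(it)$ in $A(\mathbb{R})$ is all that is needed, and $E\in L^{1}(\mathbb{R})$ together with $\operatorname{supp}E\subseteq[0,\infty)$ already gives (\ref{thmainas3simplified}). What your route buys is an entirely explicit, Gelfand-theory-free verification of invertibility via power series, which some readers may find more transparent.
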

\begin{proof} The method of this proof is essentially due to Kahane \cite[p. 633]{kahane2}. Denote as $B(\mathbb{R})$ the Banach algebra of Fourier transforms of finite Borel measures. Note that the elements of $B(\mathbb{R})$ are multipliers for the Wiener algebra $A(\mathbb{R})$. We have to show that $\hat{E}\in A(\mathbb{R})$ if (\ref{pnteex2eq2}) holds, where as usual $E(y)=e^{-y}N(e^{y})-a$. Write 
$$
L(t)=\int_{\log A}^{\infty} \frac{e^{-ity}f(y)}{y^{2}}\mathrm{d}y,
$$
and note that $a=e^{L(0)}$,
$$
\zeta(1+it)=\frac{(1+it)e^{L(t)}}{it},
$$
and
$$
\hat{E}(t)= \frac{\zeta(1+it)}{(1+it)}-\frac{e^{L(0)}}{it}= e^{L(0)} \frac{e^{L(t)-L(0)}-1}{L(t)-L(0)} \cdot \frac{L(t)-L(0)}{it}.
$$
We have that $L(t)-L(0)\in B(\mathbb{R})$, because it is the Fourier transform of the finite measure $y^{-2}f(y)\chi_{[\log A,\infty)}(y)\: \mathrm{d}y - L(0)\delta(y)$, where $\delta$ is the Dirac delta. Since entire functions act on Banach algebras, we have that $(e^{L(t)-L(0)}-1)/(L(t)-L(0))\in B(\mathbb{R})$ is a multiplier for $A(\mathbb{R})$. Therefore, $\hat{E}\in A(\mathbb{R})$ if $(L(t)-L(0))/(it)\in A(\mathbb{R})$. The rest follows by noticing that the latter function is the Fourier transform of $-\int_{x} ^{\infty}y^{-2}f(y)\mathrm{d}y$. Conversely, $(L(t)-L(0))/(e^{L(t)-L(0)}-1)\in B(\mathbb{R})$ because $z/(e^{z}-1)$ is analytic in the disc $|z|<2\pi$ and $\|L-L(0)\|_{B(\mathbb{R})}\leq 2\int_{\log A}^{\infty}y^{-2}|f(y)|\mathrm{d}y<2\pi$. 
\end{proof}

We now set out the construction of $f$. Select a non-negative test function $\varphi\in \mathcal{D}(-1/2,1/2)$ with $\varphi(0)=1$. In the sequel we consider the non-negative function
$$
g(x)=\sum_{n=1}^{\infty} \varphi(n^{3}(x-n-1/2)).
$$
It is clear that
$$
\int_{0}^{\infty}g(x)\mathrm{d}x=\left(\int_{-1/2}^{1/2}\varphi(x)\mathrm{d}x\right) \left(\sum_{n=1}^{\infty}\frac{1}{n^{3}}\right), \ \  |g'(x)|\ll x^{3}, \ \  \mbox{ and } \ \ \int_{x}^{\infty}g(y)\mathrm{d}y\ll \frac{1}{x^{2}}.
$$

We set $f(y)=g'(\log y)$ in (\ref{pnteex2eq2}) and choose $A$ so large that $|f(y)|\leq y/2$ for $y\geq \log A$. The PNT holds,
\begin{align*}
\Pi(x)&= \frac{x}{\log x}+O\left(\frac{x}{\log^{2} x}\right)+O\left(\int_{A}^{x} \frac{|g'(\log \log u)|}{\log^{2} u}\mathrm{d}u\right)
\\
&
=\frac{x}{\log x}+O\left(\frac{x(\log\log x)^{3}}{\log^{2} x}\right).
\end{align*}
Furthermore, since $\limsup_{x\to\infty} g(x)=1$, $\liminf_{x\to\infty}g(x)=0$, and
\begin{align*}
\psi_{1}(x)&=\log x-1+\int_{A}^{x}\frac{f(\log u)}{u\log u}\mathrm{d}u+o(1)
\\
&
=\log x-1+g(\log \log x)-g(\log \log A)+o(1),
\end{align*}
we obtain that the sharp Mertens relation does not hold. It remains to check (\ref{thmainas3simplified}) via Lemma \ref{lemma suff L1}, that is, we verify that (\ref{pnteex2eq2}) is satisfied. Indeed,
\begin{align*}
 \int_{x}^{\infty} \frac{f(y)}{y^{2}}\mathrm{d}y&= -\frac{g(\log x)}{x}+\int_{x}^{\infty}\frac{g(\log y)}{y^{2}}\mathrm{d}y
 \\
 &
 = -\frac{g(\log x)}{x}+O\left( \frac{1}{x\log^{2} x}\right)\in L^{1}(\mathbb{R}).
 \end{align*}
\end{example}

\smallskip

\begin{example}\label{pnteex3} We now provide an example that shows that there are situations in which (\ref{thmainas3simplified}) fails, but Theorem \ref{thmmobius} could still apply to deduce the equivalence between the Landau relations. In addition, this example satisfies
\begin{equation}
\label{pnteeqex4}
N(x)=ax+\Omega_{\pm}\left(\frac{x}{\log^{1/2} x}\right).
\end{equation}

Consider 
\[
\mathrm{d}\Pi(u)=\frac{1+\cos(\log u)}{\log u} \chi_{[2,\infty)}(u)\mathrm{d}u.
\]
This continuous generalized number system is a modification of the one used by Beurling to show the sharpness of his PNT \cite{beurling}. Note the PNT fails for $\Pi$, one has instead
$$ 
\Pi(x)=\frac{x}{\log x}\left(1+\frac{\sqrt{2}}{2}\cos \left(\log x-\frac{\pi}{4}\right)\right)+O\left(\frac{x}{\log^{2} x}\right).
 $$

We have
$$
\log\zeta(s)=-\log (s-1)-\frac{1}{2}\log (s-1-i)-\frac{1}{2}\log (s-1+i)+G(s),
$$
so that,
$$
\zeta(s)= \frac{e^{G(s)}}{(s-1)\sqrt{1+(s-1)^{2}}}\:,
$$
where $G(s)$ is an entire function. 
If we set $a=e^{G(1)}$, we obtain that
$ \zeta(s)-a/(s-1)
$
has $L^{1}_{loc}$-boundary behavior on $\Re e\:s=1$, so that, by Theorem \ref{thtauberian2},
$$
N(x)\sim ax,
$$
although $\zeta(1+it)$ is unbounded at $t=\pm i$. The condition (\ref{thmainas3simplified}) would imply continuity of $\zeta(1+it)$ at all $t\neq0$, hence, we must have
\begin{equation}
\label{pnteqex4.0}
\int_{1}^{\infty}\left|\frac{N(x)-ax}{x^{2}}\right| \mathrm{d}x=\infty.
\end{equation}
Using the same method as in \cite[Sect.~5]{d-s-v}, one can even show that there are constants $d_{0},d_1,\dots$ and $\theta_{0},\theta_2,\dots$ with $d_0\neq 0$ such that  

\begin{align}
\label{pnteeqex4.1}
N(x)&\sim ax+ \frac{x}{\log^{1/2}x}\sum^{\infty}_{j=0}d_{j}\frac{\cos(\log x+\theta_{j})}{\log^{j}x}
\\
\nonumber
&= ax+d_{0}\frac{x\cos(\log x+ \theta_{0})}{\log^{1/2} x}+ O\left(\frac{x}{\log^{3/2}x}\right).
\end{align}
This yields (\ref{pnteeqex4}), and also another proof of (\ref{pnteqex4.0}).

On the other hand, $\zeta(s)-a/(s-1)$ has an analytic extension to $1+i\mathbb{R}\setminus\{1\pm i\}$; in particular, it has  $A_{loc}$-boundary behavior on  $1+i(-1,1)$. Thus, (\ref{eqmainconditionmobius}) holds for any kernel $K\in L^{1}(\mathbb{R})$ with $\operatorname*{supp} \hat{K}\subset (-1,1)$. So, the conditions from Theorem \ref{thmmobius} on $N$ are fulfilled. Furthermore, applying the Erd\'{e}lyi's asymptotic formula \cite[p. 148]{estrada-kanwal} for finite part integrals we easily see that the entire function 
$$
G(s)=-\mathrm{F.p.}\int_{0}^{e^{2}}e^{-(s-1)y}\frac{1+\cos y}{y}\:\mathrm{d}y+2\gamma
$$
 occurring in the formula for $\log\zeta(s)$ satisfies $G(1+it)=2 \log|t| +O(1)$ and that all of its derivatives $G^{(n)}(1+it)=o(1)$. So, $\zeta^{(n)}(1+it)\ll 1$ for each $n\in\mathbb{N}$. Therefore, we obtain that $E\ast K\in \mathcal{S}(\mathbb{R})$ for all $K\in\mathcal{S}(\mathbb{R})$ without any restriction on the support of its Fourier transform. In particular, $E\ast K\in L^{1}(\mathbb{R})$ for all kernels $K\in\mathcal{S}(\mathbb{R})$.

That $M(x)=o(x)$ can be verified here by applying Tauberian theorems. For instance, we have that $\mathrm{d}M+\mathrm{d}N=2\sum_{n=0}^{\infty}\mathrm{d}\Pi^{\ast_{M}2n}/(2n)!$ is a positive measure with $\mathcal{L}\{\mathrm{d}M+\mathrm{d}N;s\}=\zeta(s)+1/\zeta(s)$; by Theorem \ref{thtauberian2}, $M(x)+N(x)\sim ax$, i.e., $M(x)=o(x)$. Theorem \ref{thmmobius} implies $m(x)=o(1)$, but this can also be deduced from Theorem \ref{thtauberian1} because $m(e^{x})$ is slowly decreasing and in this example we control $\zeta(s)$ completely: $\mathcal{L}\{m(e^{x});s\}=1/(s\zeta(s+1))$ has continuous extension to $\Re e\:s=0$. 
 
One can also construct a discrete example sharing similar properties with $\Pi$ and $N$ via Diamond's discretization procedure \cite{d-s-v,diamond2}. In fact, define the generalized primes
$$
P=\{p_k\}_{k=1}^{\infty},\quad  p_{k}=\Pi^{-1}(k)
$$
and denote by $N_P$, $\pi_{P}$, $M_{P}$, $m_{P}$, and $\zeta_{P}$ the associated generalized number-theoretic functions. Since $\pi_{P}(x)=\Pi(x)+O(1)$, we have that $\zeta_{P}(s)/\zeta(s)$ analytically extends to $\Re e\: s>1/2$. By using the same arguments as for the continuous example,  we easily get (with $c=a\zeta_{P}(1)/\zeta(1)$) that
$$
M_{P}(x)=o(x), \quad m_{P}(x)=o(1), \quad
\int_{1}^{\infty}\left|\frac{N_{P}(x)-cx}{x^{2}}\right| \mathrm{d}x=\infty, \quad E_{P}\ast K\in L^{1}(\mathbb{R}),
$$
with $E_{P}(y)= e^{-y}N_{P}(e^{y})-c$ and any $K\in L^{1}(\mathbb{R})$ with $\operatorname*{supp}\hat{K}\subset (-1,1)$,
and
$$
 \pi_{P}(x)=\frac{x}{\log x}\left(1+\frac{\sqrt{2}}{2}\cos \left(\log x-\frac{\pi}{4}\right)\right)+O\left(\frac{x}{\log^{2} x}\right).
$$

It can be shown as well that there are $b_{0}\neq0,b_{1},\dots$ and $\beta_{0},\beta_{1},\dots$ such that
\begin{align}
\label{pnteeqex4.2}
N_{P}(x)&\sim cx+ \frac{x}{\log^{1/2}x}\sum^{\infty}_{j=0}b_{j}\frac{\cos(\log x+\beta_{j})}{\log^{j}x}
\\
\nonumber
&= cx+b_{0}\:\frac{x\cos(\log x+ \beta_{0})}{\log^{1/2} x}+ O\left(\frac{x}{\log^{3/2}x}\right).
\end{align}
The proofs of the asymptotic formulas (\ref{pnteeqex4.1}) and (\ref{pnteeqex4.2}) require additional work, but the details go along the same lines as those provided in \cite[Sect.~4 and Sect.~5]{d-s-v}; we therefore choose to omit them.
\end{example}

\begin{remark}\label{pnteex4} Diamond and Zhang used a simple example \cite{diamond-zhangbook} to show that in general the implication $M(x)=o(x)\Rightarrow m(x)=o(1)$ does not hold. They considered $\pi(x)= \sum_{p\leq x}p^{-1}$, where the sum runs over all rational primes. Here one has $N(x)=o(x)$ and $M(x)=o(x)$, but $m(x)=6/\pi^{2}+o(1)$. Presumably, pointwise asymptotics of type (\ref{thmainas1simplified})
could be unrelated to the conditional equivalence between $M(x)=o(x)$ and $m(x)=o(1)$. So, we wonder: Are there examples satisfying (\ref{thmainas1simplified}) but for which  $M(x)=o(x)$ does not imply $m(x)=o(1)$?
\end{remark}
\begin{remark} In analogy to Example \ref{pnteex3}, it would be interesting to construct an example of  a generalized number system such that the sharp Mertens relation holds, $N$ satisfies the conditions (\ref{thmainas1}) and (\ref{thmainas3}) from Theorem \ref{thmain} for suitable kernels $K_1$ and $K_2$, and such that (\ref{thmainas1simplified}) and (\ref{thmainas3simplified}) do not hold.
 
\end{remark}

\end{document}